\newcommand{\p}{{\mathbb P}}
\def\virgp{\raise 2pt\hbox{,}}
\def\({\left(}
\def\){\right)}
\def\<{\langle}
\def\>{\rangle}
\theoremstyle{plain}
\newtheorem{theorem}{Theorem}[section]
\newtheorem{lemma}[theorem]{Lemma}
\newtheorem{definition}[theorem]{Definition}
\newtheorem{corollary}[theorem]{Corollary}
\newtheorem{remark}[theorem]{Remark}
\numberwithin{equation}{section}
\newcommand{\R}{{\mathbb R}}
\def\div{ \hbox{\rm div}\,  }
\newcommand\Z{{\mathbb{Z}}}
\def\La{\Lambda}
\def\ga{\Gamma}
\def\aa{\phi}
\def\ddj{\dot \Delta_j}
\def\f{\frac}
\begin{document}
\bibliographystyle{plainmma}
\baselineskip=18pt
\footnote{
Email Addresses: zhaixp@szu.edu.cn (X. Zhai).}
\begin{center}
{\Large\bf  Global  solutions to the  $n$-dimensional incompressible Oldroyd-B model without damping mechanism}\\[1ex]

Xiaoping Zhai \\[1ex]
 School  of Mathematics and Statistics, Shenzhen University,
 Shenzhen 518060, China
\end{center}

\centerline{\Large\bf Abstract}
The present work is dedicated to the global solutions to the incompressible Oldroyd-B model without damping on the stress tensor in $\mathbb{R}^n(n=2,3)$. 
This result
allows to construct global solutions for a class of highly oscillating
initial velocity.
The proof uses the special structure of the system.
Moreover, our theorem extends the previous result by  Zhu \cite{zhuyi} and  covers the recent  result  by Chen and Hao  \cite{chenqionglei}.

\date{\today}
\noindent {\bf Key Words:}
{Global solutions;  Oldroyd-B model; Besov space}

\noindent {\bf Mathematics Subject Classification (2010)} {76A10; 76D03 }


\setcounter{section}{1}
\setcounter{theorem}{0}
\section*{\Large\bf 1. Introduction and the main result}
In this paper, we mainly consider the  incompressible Oldroyd-B model without damping mechanism which has the following form:
\begin{eqnarray}\label{m}
\left\{\begin{aligned}
&\partial_t\tau + u\cdot \nabla \tau   + F(\tau, \nabla u)  - K_2 D (u)=0,\\
&\partial_t u+ u\cdot\nabla u-\mu\Delta u+\nabla\Pi-K_1  \div \tau=0,\\
&\div u =0,\\
&(u,\tau)|_{t=0}=(u_0,\tau_0),
\end{aligned}\right.
\end{eqnarray}
where
 $u=(u_{1},u_{2},\cdot\cdot \cdot, u_{n})$ denotes the velocity, $\Pi$ is the scalar pressure of fluid. $\tau=\tau_{i,j}$ is the non-Newtonian part of stress tensor which can be seen as a symmetric matrix here. $D(u)$ is the symmetric part of $\nabla u$,
\begin{equation*}
D(u) = \frac{1}{2} \big( \nabla u + (\nabla u)^{T} \big),
\end{equation*}
and $F $ is a given bilinear form which can be chosen as
\begin{equation*}
F(\tau, \nabla u)= \tau \Omega(u) - \Omega(u) \tau + b(D(u) \tau + \tau D(u)),
\end{equation*}
where $b$ is a parameter in $[-1,1]$, $\Omega(u)$ is the skew-symmetric part of $\nabla u$, namely
\begin{equation*}
\Omega(u) = \frac{1}{2} \big( \nabla u - (\nabla u)^{T} \big).
\end{equation*}
The coefficients $\mu,  K_1, K_2$ are assumed to be non-negative constants.

In fact, the above system \eqref{m} is only the subsystem of the following full incompressible Oldroyd-B model:
\begin{eqnarray}\label{mmm}
\left\{\begin{aligned}
&u_t + u\cdot \nabla u -  \mu \Delta u + \nabla \Pi =  K_1 \div \tau,  \\
&\tau_t + u\cdot \nabla \tau  -  \eta \Delta\tau + \beta \tau + F(\tau, \nabla u) =  K_2 D (u),\\
&\div u = 0,\\
&(u,\tau)|_{t=0}=(u_0,\tau_0),
\end{aligned}\right.
\end{eqnarray}
in which $\eta $ and $ \beta$  are two non-negative constants.

The Oldroyd-B model describes the motion of some viscoelastic flows, for example, the system coupling fluids and polymers. It presents a typical constitutive law which does not obey the Newtonian law (a linear relationship between stress and the gradient of velocity
in fluids). Such non-Newtonian property may arise from the memorability of some fluids. Formulations about viscoelastic flows of Oldroyd-B type are first introduced by Oldroyd \cite{Oldroyd} and are extensively discussed in \cite{bird}.

About the derivation of the system \eqref{mmm}, the interested readers can refer to \cite{lin2012}, here we omit it.
As one of the most popular constitutive laws, Oldroyd-B model of viscoelastic fluids has attracted many attentions and lots of excellent works have been done (see \cite{chemin}, \cite{chenqionglei}, \cite{constanin},
\cite{EL}, \cite{ER}, \cite{FZ}, \cite{GS}, \cite{GS2}, \cite{LMZ}, \cite{LZ}, \cite{zhuyi}, \cite{zuiruizhao}) and references therein.
Guillop\'{e} and Saut \cite{GS}, \cite{GS2} got the local well-posedness   with large initial data and global well-posedness provided that the coupling parameter and  initial data are small enough. Lions and Masmoudi \cite{LM} got the global existence of weak solutions in the corotational case ($b = 0$). However, the case $b \neq 0$ is still not clear by now.
 In the framework of the  near critical  Besov spaces, Chemin and Masmoudi \cite{chemin} first studied the local solutions and global small solutions of system \eqref{mmm} when $ \mu>0$, $K_1>0$, $K_2>0$, $\eta=0$, $\beta>0$. Zi, Fang and Zhang  \cite{zuiruizhao}  improved the result obtained by Chemin and Masmoudi  in   \cite{chemin} to the non-small coupling parameter case.
Recently, Elgindi and Rousset  \cite{ER} proved the global small solutions to  \eqref{mmm} with $\mu=0,K_1,K_2,\eta,\beta>0$  in Sobolev  space $H^s(\R^2), s>2$.
Moreover, if neglect the effect of the quadratic form $F(\tau, \nabla u)$ and let $\mu=0,K_1\ge0,K_2\in\R,\eta>0,\beta\ge0$, they also got the global solutions without any smallness imposed on the initial data in $\R^2$.
Later on, Elgindi and Liu  \cite{EL} consider the global well-posedness of system \eqref{mmm} in $\R^3.$
When  $\mu=0$, $K_1\ge0$, $K_2\in\R$, $\eta>0$, $-\beta>0$,  they obtained the global small solutions in Sobolev spaces
 $H^s(\R^3)$, $s>{5}/{2}$.
 Let us emphasis that the results obtained in \cite{EL}, \cite{ER}, \cite{FZ}, \cite{LM}, \cite{zuiruizhao} always require $\beta > 0$ in \eqref{mmm} (namely the system with damping) at least for non-trivial initial data.
Thus,  it's an interesting problem to study the global well-posedness when $\mu>0$, $\eta=0$, $\beta=0$, $K_1>0$, $K_2>0$ in \eqref{mmm} in $\R^n(n=2,3)$.
Most recently,
Zhu \cite{zhuyi} obtained the global small  solutions to the three-dimensional incompressible Oldroyd-B model without damping on the stress tensor (i.e. $\beta=0$), more precisely, the author in \cite{zhuyi} proved the following theorem.
\begin{theorem}\label{thm}(see \cite{zhuyi})
Let $n=3$, $\mu$,  $K_1$, $K_2 >0$. Suppose that $\div u_0 = 0, (\tau_0)_{ij} =( \tau_0)_{ji}$ and initial data 
$\varepsilon$ such that system \eqref{m} admits a unique global classical solution provided that
$$ \big\|\La^{-1}u_0\big\|_{H^3} + \big\||\La^{-1}\tau_0\big\|_{H^3} \leq \varepsilon, $$
where $\La = (-\Delta)^\frac{1}{2}$.
\end{theorem}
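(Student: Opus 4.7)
The plan is to prove Theorem~\ref{thm} by a continuation argument starting from standard local well-posedness at the $H^{3}$ regularity level; the heart of the matter is to derive uniform-in-time a priori estimates under the smallness assumption on $\La^{-1}u_{0}$ and $\La^{-1}\tau_{0}$. The key observation is that, although $\tau$ has no intrinsic diffusion or damping in (\ref{m}), the coupling between $\div\tau$ in the $u$-equation and $D(u)$ in the $\tau$-equation hides an effective dissipation, which can be extracted provided one works one derivative lower. This motivates introducing $U:=\La^{-1}u$ and $T:=\La^{-1}\tau$, whose linearisation retains the same form as (\ref{m}) but with the smallness assumption now reading $\|U_{0}\|_{H^{3}}+\|T_{0}\|_{H^{3}}\le\varepsilon$.

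Indeed, differentiating the linearised $u$-equation $\partial_{t}u-\mu\Delta u=K_{1}\div\tau$ in time, substituting $\partial_{t}\tau=K_{2}D(u)$ and using that $\div D(u)=\tfrac{1}{2}\Delta u$ on divergence-free fields, one formally obtains the strongly damped wave equation $\partial_{tt}u-\mu\Delta\partial_{t}u-\tfrac{K_{1}K_{2}}{2}\Delta u=0$. Hence $u$ dissipates parabolically, while $\tau$ inherits only a weaker, hyperbolic-type dissipation concentrated at its divergence part. The $\La^{-1}$ scaling in the hypothesis is exactly dictated by the balance between these two mechanisms.

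To turn this into rigorous a priori bounds I would construct a Lyapunov functional of the form
\begin{equation*}
\mathcal{E}(t)=\sum_{|\alpha|\le 3}\Bigl(\|\partial^{\alpha}U\|_{L^{2}}^{2}+\tfrac{K_{1}}{K_{2}}\|\partial^{\alpha}T\|_{L^{2}}^{2}-\delta\,\langle\partial^{\alpha}U,\,\La^{-1}\partial^{\alpha}\div T\rangle\Bigr),
\end{equation*}
for a sufficiently small parameter $\delta>0$, which will be equivalent to $\|U\|_{H^{3}}^{2}+\|T\|_{H^{3}}^{2}$. The standard parabolic energy estimate on $\partial^{\alpha}U$ and $\partial^{\alpha}T$, exploiting the cancellation $\langle\div\tau,u\rangle+\langle D(u),\tau\rangle=0$ from symmetry of $\tau$ and $\div u=0$, yields the velocity dissipation $\mu\|\nabla U\|_{H^{3}}^{2}$. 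Differentiating the cross term and using the equations together with $\La^{-1}\div D(u)=-\tfrac{1}{2}\La u$ produces the missing dissipation $\delta\|\La^{-1}\div T\|_{H^{3}}^{2}$, at the cost of lower-order contributions absorbable by the parabolic dissipation of $U$.

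The principal obstacle will be closing the nonlinear terms. The form $F(\tau,\nabla u)$ contains a full derivative of $u$, and the transport term $\La^{-1}(u\cdot\nabla\tau)$ seemingly loses one derivative on $\tau$; after commuting with $\partial^{\alpha}$ at $|\alpha|=3$ one is exactly at the edge of the available regularity. I would handle them via sharp product and commutator estimates in $H^{s}$, combined with the Sobolev embedding $H^{s}\hookrightarrow L^{\infty}$ for $s>3/2$, splitting each nonlinearity into a piece bounded by the parabolic dissipation $\|\nabla U\|_{H^{3}}$ and a piece with a smallness factor extracted from $\mathcal{E}^{1/2}$. This should lead to a differential inequality of the form
\begin{equation*}
\tfrac{d}{dt}\mathcal{E}(t)+c\,\mathcal{D}(t)\le C\,\mathcal{E}(t)^{1/2}\mathcal{D}(t),
\end{equation*}
where $\mathcal{D}(t):=\|\nabla U\|_{H^{3}}^{2}+\|\La^{-1}\div T\|_{H^{3}}^{2}$, from which a standard bootstrap extends the local solution globally as soon as $\varepsilon$ is small enough.
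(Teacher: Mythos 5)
Your proposal correctly reconstructs the structural idea behind Theorem~\ref{thm}: the coupling $K_1\div\tau\leftrightarrow K_2D(u)$ conceals a damped-wave dissipation that can be surfaced by a modified energy with a cross term, working one derivative below the natural regularity. That is indeed the strategy of Zhu~\cite{zhuyi}, which is where Theorem~\ref{thm} comes from. Be aware, though, that the present paper does not re-prove Theorem~\ref{thm} at all; it cites it and then establishes the stronger Theorem~\ref{zhuyaodingli} by a genuinely different route. The paper works in critical Besov spaces, splits low and high frequencies, sets $\phi=\La^{-1}\p\div\tau$, and introduces the auxiliary unknowns $w=\La\phi-u$ at low frequency (whose dyadic energy \eqref{G9}--\eqref{G12} yields $\|\La\phi\|_{L^2}^2$ as dissipation) and $\gamma=u-\La^{-1}\phi$ at high frequency (\eqref{asG5}--\eqref{asaA2}, producing a pure $L^1_t$ damping of $\phi$). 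This separation exactly mirrors the parabolic/damped eigenvalue dichotomy of $A(\xi)$ in \eqref{GQ11}: two derivatives of heat-type smoothing for $\phi^{\ell}$ and mere time integrability for $\phi^{h}$. Moreover, the Besov/$L^p$ framework is what permits the highly oscillating data of Remark~\ref{2333}; a fixed-index $H^3$ Lyapunov functional cannot reach that endpoint. So your approach buys simplicity and self-containedness at the Sobolev level, while the paper's buys critical regularity and sharper, frequency-resolved dissipation.

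Two cautions on the sketch itself. First, the cross term as written runs into trouble at the low-frequency end. With $\mathcal E$ containing $-\delta\langle\partial^{\alpha}U,\La^{-1}\partial^{\alpha}\div T\rangle$, using $\partial_t T=K_2D(U)+\cdots$ and $\div D(U)=\tfrac12\Delta U$ on divergence-free fields, its time derivative produces $+\tfrac{\delta K_2}{2}\|\La^{1/2}\partial^{\alpha}U\|_{L^2}^2$ on the forcing side. For $|\alpha|\ge1$ this interpolates below $\mu\|\nabla U\|_{H^3}^2$ and is absorbed, but for $\alpha=0$ one has $\|\La^{1/2}U\|_{L^2}^2\lesssim \|U\|_{L^2}^2$ on the low-frequency part, which is only controlled by $\mathcal E$ and not by any available dissipation; the resulting $C\delta\,\mathcal E$ on the right spoils the global-in-time Gr\"onwall bound. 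This is exactly what the paper's frequency-localized auxiliary variables ($w$ for low, $\gamma$ for high) are designed to circumvent, and it signals that a single fixed multiplier $\La^{-1}$ in the cross term is not quite right. Second, two smaller points: the cross term must carry the Leray projector $\p$ to kill the pressure contribution, and what it extracts is a dissipation of $\p\div T=\La^{-1}\p\div\tau$ (the paper's $\phi$), not of $\La^{-1}\div T$ as you wrote; relatedly, the $\La^{-1}$ scaling in the hypothesis is dictated not by the linear balance alone but by the need to close the transport term $u\cdot\nabla\tau$ at the $H^3$ level against this partial dissipation.
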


However, the method used by Zhu in \cite{zhuyi} is not valid for $n=2.$  Recently, Chen and Hao  \cite{chenqionglei} generalized the result by
Zhu in \cite{zhuyi} to the critical Besov spaces.
The aim of the present paper is to establish the global solutions of  \eqref{m} with a class of highly oscillating
initial velocity.

In all that follows, let $\mu= K_1=K_2=1$ in \eqref{m}, $\La \stackrel{\mathrm{def}}{=} (-\Delta)^\frac{1}{2}$, we shall denote the projector by $\p=\mathcal{I}-\mathcal{Q}\stackrel{\mathrm{def}}{=}\mathcal{I}-\nabla\Delta^{-1}\div$.

Now, we can  state the  main theorem of the present paper:
\begin{theorem}\label{zhuyaodingli}
Let   $ n=2,3$ and
\begin{equation*}
2\leq p \leq \min(4,{2n}/({n-2}))\quad\hbox{and, additionally, }\  p\not=4\ \hbox{ if }\ n=2.
\end{equation*}  For any $(u_0^\ell,\tau_0^\ell)\in \dot{B}_{2,1}^{\frac n2-1}(\R^n)$, $u_0^h\in \dot{B}_{p,1}^{\frac np-1}(\R^n)$,  $\tau_0^h\in \dot{B}_{p,1}^{\frac np}(\R^n)$ with $\div u_0 = 0$.
 If  there exists a positive constant $c_0$ such that,
\begin{align}\label{smallness2}
\|(u_0^\ell,\tau_0^\ell)\|_{\dot{B}_{2,1}^{\frac {n}{2}-1}}+ \|u_0^h\|_{\dot B^{\frac  np-1}_{p,1}}+\|\tau_0^h\|_{\dot B^{\frac  np}_{p,1}}\leq c_0,
\end{align}
then
the system \eqref{m} has a unique global solution $(u,\tau)$ so that for any $T>0$
\begin{align*}
&u^\ell\in C_b([0,T );{\dot{B}}_{2,1}^{\frac {n}{2}-1}(\R^n))\cap L^{1}
([0,T];{\dot{B}}_{2,1}^{\frac n2+1}(\R^n)),\\
&
\tau^\ell\in C_b([0,T );\dot{B}_{2,1}^{\frac n2-1}(\R^n)), \quad(\Lambda^{-1}\p\div\tau)^\ell\in L^{1}
([0,T];{\dot{B}}_{2,1}^{\frac n2+1}(\R^n)),\\
&u^h\in C_b([0,T );{\dot{B}}_{p,1}^{\frac {n}{p}-1}(\R^n))\cap L^{1}
([0,T];{\dot{B}}_{p,1}^{\frac np+1}(\R^n)),\\
&
\tau^h\in C_b([0,T );\dot{B}_{p,1}^{\frac np}(\R^n)), \quad (\Lambda^{-1}\p\div\tau)^h\in L^{1}
([0,T];{\dot{B}}_{p,1}^{\frac np}(\R^n)).
\end{align*}
\end{theorem}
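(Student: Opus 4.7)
The difficulty is the absence of any damping on $\tau$: the $\tau$-equation only transports $\tau$ and is forced by $D(u)$. Following Zhu \cite{zhuyi} and Chen--Hao \cite{chenqionglei}, I would exploit a hidden wave-like dissipative structure inside the coupling between $u$ and the effective unknown $W:=\La^{-1}\p\div\tau$. Since $\p\div D(u)=\tfrac12\Delta u$ whenever $\div u=0$, applying $\p$ to the momentum equation and $\La^{-1}\p\div$ to the $\tau$-equation yields
\begin{align*}
\partial_t u-\Delta u-\La W &= -\p(u\cdot\nabla u),\\
\partial_t W+\tfrac12\La u &= \La^{-1}\p\div\bigl(-u\cdot\nabla\tau-F(\tau,\nabla u)\bigr).
\end{align*}
Eliminating $W$ produces a damped wave equation $\partial_t^2 u-\Delta\partial_t u-\tfrac12\Delta u=\mathrm{NL}$, whose symbol $\lambda^2+|\xi|^2\lambda+\tfrac12|\xi|^2=0$ has two roots with real part $-|\xi|^2/2$ for $|\xi|^2<2$ (parabolic damping at rate $|\xi|^2$), while for $|\xi|^2>2$ one root behaves like $-|\xi|^2$ and the other like $-\tfrac12$. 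This exactly matches the regularity claimed in Theorem \ref{zhuyaodingli}: a gain of two derivatives in $L^1_T$ for both $u^\ell$ and $W^\ell$, a gain of two for $u^h$, and only an $L^1_T$-bound (no regularity gain) for $W^h$.

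\textbf{A priori estimates.} For the low-frequency block I would apply $\ddj$ with $j\le j_0$ to the coupled $(u,W)$ system and construct an $L^2$ Lyapunov functional of the form $\alpha\|\ddj u^\ell\|_{L^2}^2+\beta\|\ddj W^\ell\|_{L^2}^2+\gamma\int\ddj u^\ell\cdot\ddj\nabla W^\ell\,dx$ whose dissipation controls $2^{2j}\bigl(\|\ddj u^\ell\|_{L^2}^2+\|\ddj W^\ell\|_{L^2}^2\bigr)$; summing with $2^{j(n/2-1)}$ weights delivers the desired bounds on $u^\ell$ and $W^\ell$, and the $\widetilde L^\infty_T\dot B^{n/2-1}_{2,1}$ bound on $\tau^\ell$ then comes from time-integrating the $\tau$-equation directly. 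For the high-frequency block, $u^h$ is handled by maximal $L^1_T$ heat regularity with forcing $\p\div\tau^h$, using the identity $\|\p\div\tau^h\|_{\dot B^{n/p-1}_{p,1}}=\|W^h\|_{\dot B^{n/p}_{p,1}}$ to close the loop with $W^h$; $\tau^h\in\widetilde L^\infty_T\dot B^{n/p}_{p,1}$ follows from Danchin's Besov transport estimate with source $D(u)\in L^1_T\dot B^{n/p}_{p,1}$; and the $L^1_T\dot B^{n/p}_{p,1}$ bound on $W^h$ is read off from $\partial_t W^h+\tfrac12\La u^h=\mathrm{NL}^h$ together with the constant-rate exponential damping identified in the spectral analysis above.

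\textbf{Main obstacle and closing.} The hard part is to control the nonlinear terms $u\cdot\nabla u$, $u\cdot\nabla\tau$ and $F(\tau,\nabla u)$ in this hybrid framework so that they are absorbed by the square of the total energy
\begin{align*}
\mathcal E(T):=&\ \|(u^\ell,\tau^\ell)\|_{\widetilde L^\infty_T\dot B^{n/2-1}_{2,1}}+\|(u^\ell,W^\ell)\|_{L^1_T\dot B^{n/2+1}_{2,1}}\\
&+\|u^h\|_{\widetilde L^\infty_T\dot B^{n/p-1}_{p,1}\cap L^1_T\dot B^{n/p+1}_{p,1}}+\|\tau^h\|_{\widetilde L^\infty_T\dot B^{n/p}_{p,1}}+\|W^h\|_{L^1_T\dot B^{n/p}_{p,1}}.
\end{align*}
This requires Bony's paraproduct decomposition together with hybrid Besov product laws mixing the $L^2$ (low) and $L^p$ (high) scales; the condition $2\le p<4$ enters precisely to guarantee the embedding $\dot B^{n/p}_{p,1}\hookrightarrow L^\infty$ and the product law $\dot B^{n/p}_{p,1}\cdot\dot B^{n/p-1}_{p,1}\hookrightarrow\dot B^{n/p-1}_{p,1}$ used on $F(\tau,\nabla u)$ and $u\cdot\nabla u$ at high frequencies. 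Once the nonlinear bound $\|\mathrm{NL}\|\lesssim\mathcal E(T)^2$ is proved, a continuity/bootstrap argument applied to the local solution (obtained via Friedrichs mollification and passage to the limit using the same a priori bounds) upgrades it to a global one under \eqref{smallness2}; uniqueness is established by running the same scheme on the difference of two solutions at one level of regularity below.
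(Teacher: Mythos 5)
Your proposal follows essentially the same strategy as the paper: the same effective variable $W=\Lambda^{-1}\p\div\tau$ (called $\phi$ in the paper), the same spectral diagnosis of the linearized $(u,W)$-system (parabolic decay for low frequencies, a parabolic plus an $O(1)$-damped mode for high frequencies), the same low/high frequency split with an $L^2$-based Besov frame at low frequencies and an $L^p$-based one at high frequencies, and the same reliance on hybrid product/commutator laws and a bootstrap to close. Your low-frequency cross-term Lyapunov functional $\alpha\|u\|^2+\beta\|W\|^2+\gamma\int u\cdot\nabla W$ is equivalent, after expanding the square, to the paper's functional $\|\ddj\phi\|_{L^2}^2+(1-\eta)\|\ddj u\|_{L^2}^2+\eta\|\ddj w\|_{L^2}^2$ built from the auxiliary unknown $w=\Lambda\phi-u$; both are positive definite for small $\eta$ (respectively $\gamma$) and generate the dissipation $2^{2j}(\|\ddj u\|^2+\|\ddj\phi\|^2)$.

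The one place where your sketch is not yet a proof is the high-frequency bound $\|W^h\|_{L^1_T\dot B^{n/p}_{p,1}}$. As written, the equation $\partial_t W+\tfrac12\Lambda u=\mathrm{NL}$ carries no visible damping on $W$: integrating it controls $W^h$ in $\widetilde L^\infty_T$ from an $L^1_T$ bound on $\Lambda u^h$, but gives nothing in $L^1_T$ for $W^h$ itself. Moreover your plan to get $u^h\in L^1_T\dot B^{n/p+1}_{p,1}$ by maximal heat regularity with forcing $\p\div\tau^h$ needs exactly $\|W^h\|_{L^1_T\dot B^{n/p}_{p,1}}$ as input, so the loop is circular unless you first diagonalize. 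The fix is what the paper does in its high-frequency subsection: introduce $\Gamma:=u-\Lambda^{-1}W$, which turns $\Lambda u=\Lambda\Gamma+W$ and hence converts the $W$-equation into $\partial_t W+u\cdot\nabla W+W=f-\Lambda\Gamma$, making the $O(1)$ damping on $W$ manifest, while $\Gamma$ satisfies a heat-type equation $\partial_t\Gamma-\Delta\Gamma=\Gamma+\Lambda^{-1}W+\cdots$ whose forcing is controllable; one then estimates $\phi^h$ and $\Gamma^h$ simultaneously (with the small factor $2^{-2j_0}$ absorbing the linear terms) and recovers $u^h=\Gamma^h+\Lambda^{-1}W^h$. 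You gesture at this with the phrase ``constant-rate exponential damping identified in the spectral analysis,'' but extracting that damping at the nonlinear level requires the explicit change of unknown; once you add that step, your argument matches the paper's.
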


\begin{remark}\label{23}
By a similar argument as Zhu in \cite{zhuyi}, treating the nonlinear term to linear term, we can also get the global small solutions for
 the
incompressible viscoelastic system with Hookean elasticity.
\end{remark}

\begin{remark}\label{2333}
Most recently, Chen and Hao in \cite{chenqionglei} get the global well-posedness of \eqref{m} in $\R^n$ with $ n\ge2.$ Compared with  Chen and Hao in \cite{chenqionglei}, the global solutions we constructed here allow the highly oscillating
initial velocity.  A typical example is
\begin{align*}
u_0(x)=\sin\bigl(\frac {x_1} {\varepsilon}\bigr)\phi(x), \quad \phi(x)\in \mathcal{S}(\R^n),\quad  p>n
\end{align*}
which satisfies for any $\varepsilon>0$
\begin{align*}
\|u_0^\ell\|_{\dot{B}_{2,1}^{\frac {n}{2}-1}}+ \|u_0^h\|_{\dot B^{\frac  np-1}_{p,1}}\le C\varepsilon^{1-\frac np},
\end{align*}
here C is a constant independent of $\varepsilon$ (see [\cite{miaochangxing}, Proposition 2.9]).
\end{remark}

\begin{remark}\label{123}
 Compared with the result obtained by Chemin and Masmoudi in \cite{chemin}, we also obtain the global small solutions, yet there is no  damping mechanism.
\end{remark}
\begin{remark}
Our methods can be used to other related models. Similar results for the compressible Oldroyd-B model will be given in a forthcoming paper.
\end{remark}


\noindent $\mathbf{ Scheme\  of \ the\  proof\  and\  organization \ of\  the\   paper.}$ The main difficulty to the proof of Theorem \ref{zhuyaodingli} lies in the fact that there is no  dissipation in stress tensor. Thus, we can not get directly any integration for stress tensor $\tau$ about time in the basic energy argument. Indeed, we also can not get any  integration about time of $u$. One can see more detail  in the derivation of \eqref{caihong7} in the third section.

To exploit the dissipation of $u$ and to find the partial dissipation hidden for $\tau$, let us  first study the linear system of \eqref{m} (without loss of generality, set $\mu = K_1 = K_2 = 1$).

Applying project operator $\p$ on both hand side of the first two equation in \eqref{m} gives
\begin{eqnarray}\label{QG1}
\left\{\begin{aligned}
&\partial_t  u -\Delta  u-\p \div \tau=G_1,\\
&\partial_t \p \div \tau -\Delta u=G_2.
\end{aligned}\right.
\end{eqnarray}

At the linear level,
to weaken the effect of $\Delta u$ appeared in the stress tensor equation, we introduce
 $\aa\stackrel{\mathrm{def}}{=}\Lambda^{-1}\p\div\tau$ with $\Lambda \stackrel{\mathrm{def}}{=}(-\Delta )^{\frac12}$,  a simple computation from \eqref{QG1} gives
\begin{equation}\label{QGQ1}
\left\{\begin{array}{l}
\partial_t\aa+\La u=\Lambda^{-1}G_2,\\
\partial_t u-\Delta u-\La\aa=G_1.
\end{array}\right.\end{equation}
The above system is
similar to the linear system of the compressible Navier-Stokes equations \cite{danchin2000}.
In the following, we recall the analysis of the linearized system \eqref{QGQ1}.
Taking the Fourier transform with respect to $x,$
 System  \eqref{QGQ1} translates  into
\begin{equation}\label{GQ11}
\frac d{dt}\left(\begin{array}{c}\hat{\aa}\\\hat{u}\end{array}\right)=
A(\xi)\left(\begin{array}{c}\hat{\aa}\\\hat{u}\end{array}\right)+\left(\begin{array}{c}\widehat{{\Lambda^{-1}G_2}}\\\widehat{{G_1}}\end{array}\right)  \quad\hbox{with}\quad
A(\xi)\stackrel{\mathrm{def}}{=} \left(\begin{array}{cc}0&-|\xi|\\ |\xi|&-|\xi|^2\end{array}\right)\cdotp
\end{equation}
\begin{itemize}
\item In the low frequency regime $|\xi|<2,$ $A(\xi)$ has  two complex conjugated eigenvalues:
$$
\lambda_\pm(\xi)\stackrel{\mathrm{def}}{=} -\frac{\xi^2}{2}\left(1\pm i \sqrt{\frac{4-\xi^2}{\xi^2}}\right)
$$
which have real part $-\frac{\xi^2}{2},$ exactly as for the heat equation with diffusion $\frac12.$
\item  In the  high frequency regime $|\xi|>2,$ there are two distinct real eigenvalues:
$$
\lambda_\pm(\xi)\stackrel{\mathrm{def}}{=} -\frac{\xi^2}{2}\left(1\pm \sqrt{\frac{\xi^2-4}{\xi^2}}\right).
$$
 As $1+\sqrt{\frac{\xi^2-4}{\xi^2}}\sim  2$  and $1-\sqrt{\frac{\xi^2-4}{\xi^2}}\sim  \frac{2}{\xi^2}$ for $\xi\to+\infty,$ we can deduce that
 $\lambda_+(\xi)\sim -\xi^2$ and $\lambda_-(\xi)\sim -1\cdotp$
In other words, a parabolic and a damped mode coexist.
\end{itemize}
Optimal a priori estimates may be easily derived by computing
the explicit solution of  \eqref{GQ11} explicitly in the Fourier space.

In the second section, we shall collect some basic facts on Littlewood-Paley analysis and various product laws in
Besov spaces. In  Section 3, we will use three subsections to prove the main Theorem \ref{zhuyaodingli}, we apply the Littlewood-Paley theory to get the basic energy estimates for $(u,\tau)$,  and then  by introducing a new quantity, we get the low frequency and high frequency of the solutions of $(a, \p\div \tau)$ in the first subsection and  the second subsection, respectively.
 Finally in the last subsection, we present the proof to the global well-posedness of Theorem \ref{zhuyaodingli} by  standard continuous argument.

\noindent$\mathbf{Notations:}$ Let $A$, $B$ be two operators, we denote $[A, B] = AB - BA$, the commutator
between $A$ and $B$. For $a\lesssim b$, we mean that there is a uniform constant $C$, which may
be different on different lines, such that $a \le C b$. We shall denote by$\langle a,b\rangle$ the $L^2(\R^n)$ inner product of $a$ and $b$.
\setcounter{section}{2}
\setcounter{theorem}{0}
\setcounter{equation}{0}
\section*{\large\bf 2. Preliminaries}
The {Littlewood-Paley decomposition}  plays a central role in our analysis.
To define it,   fix some  smooth radial non increasing function $\chi$
supported in the ball $B(0,\frac 43)$ of $\R^n,$ and with value $1$ on, say,   $B(0,\frac34),$ then set
$\varphi(\xi)=\chi(\frac{\xi}{2})-\chi(\xi).$ We have
$$
\qquad\sum_{j\in\Z}\varphi(2^{-j}\cdot)=1\ \hbox{ in }\ \R^n\setminus\{0\}
\quad\hbox{and}\quad \mathrm{Supp}\,\varphi\subset \Big\{\xi\in\R^n : \frac34\leq|\xi|\leq\frac83\Big\}\cdotp
$$
The homogeneous dyadic blocks $\dot{\Delta}_j$ are defined on tempered distributions by
$$\dot{\Delta}_j u\stackrel{\mathrm{def}}{=}\varphi(2^{-j}D)u\stackrel{\mathrm{def}}{=}{\mathcal F}^{-1}(\varphi(2^{-j}\cdot){\mathcal F} u).
$$
In order to ensure that
\begin{equation}\label{eq:decompo}
u=\sum_{j\in\Z}\dot{\Delta}_j u\quad\hbox{in}\quad\mathcal S'(\R^n),
\end{equation}
we restrict our attention to  those tempered distributions $u$ such that
\begin{equation}\label{eq:Sh}
\lim_{k\rightarrow-\infty}\|\dot S_ku\|_{L^\infty}=0,
\end{equation}
where $\dot S_k u$ stands for the low frequency cut-off defined by $\dot S_ku\stackrel{\mathrm{def}}{=}\chi(2^{-k}D)u$.
\begin{definition}\label{d:besov}
 For $s\in\R$, $1\le p\le\infty,$   the homogeneous Besov space $\dot B^s_{p,1}\stackrel{\mathrm{def}}{=}\dot B^{s}_{p,1}(\R^n)$ is  the
set of tempered distributions $u$ satisfying \eqref{eq:Sh} and
\begin{equation}\label{wqq1}
\|u\|_{\dot B^s_{p,1}}\stackrel{\mathrm{def}}{=}\sum_{j\in\Z} 2^{js}
\|\dot{\Delta}_j  u\|_{L^p}<\infty.
\end{equation}
\end{definition}
\begin{remark} For $s\leq \frac np$ (which is the only case we are concerned with in
this paper), $\dot B^s_{p,1}$ is a Banach space which coincides with
the completion for $\|\cdot\|_{\dot B^s_{p,1}}$ of the set $\mathcal S_0(\R^n)$ of Schwartz functions
with Fourier transform supported away from the origin.
\end{remark}

In this paper, we frequently use the so-called ''time-space" Besov spaces
or Chemin-Lerner space first introduced by Chemin and Lerner \cite{bcd}.
\begin{definition}
Let $s\in\mathbb{R}$ and $0<T\leq +\infty$. We define
\begin{equation}\label{wqq2}
\|u\|_{\widetilde{L}_{T}^{q}(\dot{B}_{p,1}^{s})}\stackrel{\mathrm{def}}{=}
\sum_{j\in\mathbb{Z}}
2^{js}\left(\int_{0}^{T}\|\dot{\Delta}_{j}u(t)\|_{L^p}^{q}dt\right)^{\frac{1}{q}}
\end{equation}
for  $ q,\,p \in [1,\infty)$ and with the standard modification for $p,\,q =\infty $.
\end{definition}

By Minkowski's inequality, we have the following inclusions between the
Chemin-Lerner space ${\widetilde{L}^\lambda_{T}(\dot{B}_{p,1}^s)}$ and the Bochner space ${{L}^\lambda_{T}(\dot{B}_{p,1}^s)}$:
\begin{align*}
\|u\|_{\widetilde{L}^\lambda_{T}(\dot{B}_{p,1}^s)}\le\|u\|_{L^\lambda_{T}(\dot{B}_{p,1}^s)}\hspace{0.5cm} \mathrm{if }\hspace{0.2cm}  \lambda\le r,\hspace{0.5cm}
\|u\|_{\widetilde{L}^\lambda_{T}(\dot{B}_{p,1}^s)}\ge\|u\|_{L^\lambda_{T}(\dot{B}_{p,1}^s)},\hspace{0.5cm} \mathrm{if }\hspace{0.2cm}  \lambda\ge r.
\end{align*}

Restricting the above norms \eqref{wqq1} and \eqref{wqq2} to the low or high
frequency parts of distributions will be crucial in our approach. For example, let us fix some integer $j_{0}$ (the value of which will
follow from the proof of the main theorem) and set$^1$\footnote{$^1$Note that for technical reasons, we need a small
overlap between low and high frequency.}
\begin{equation}\label{wqq3}
\| z^{\ell}\| _{\dot{B}_{p,1}^{s}} \stackrel{\mathrm{def}}{=} \sum_{j\leq
j_{0}}2^{js}\| \dot{\Delta}_{j}z\|_{L^{p}} \ \ \mbox{and} \ \ \|z^{h}\|_{\dot{B}_{p,1}^{s}}\stackrel{\mathrm{def}}{=} \sum_{j\geq j_{0}-2}2^{js}\| \dot{\Delta}_{j}z\| _{L^{p}},
\end{equation}
\begin{equation}\label{wqq4}
\|z^{\ell}\| _{\widetilde{L}_{T}^{\infty} (\dot{B}_{p,1}^{s})} \stackrel{\mathrm{def}}{=}
\sum_{j\leq j_{0}}2^{js}\|\dot{\Delta}_{j}z\|_{L_{T}^{\infty} (L^{p})} \ \
\mbox{and} \ \ \|z^{h}\| _{\widetilde{L}_{T}^{\infty} (\dot{B}_{p,1}^{s})}\stackrel{\mathrm{def}}{=} \sum_{j\geq j_{0}-2}2^{js}\| \dot{\Delta}_{j}z\|
_{L_{T}^{\infty} (L^{p})}.
\end{equation}

The following Bernstein's lemma will be repeatedly used throughout this paper.
\begin{lemma}\label{bernstein}
Let $\mathcal{B}$ be a ball and $\mathcal{C}$ a ring of $\mathbb{R}^n$. A constant $C$ exists so that for any positive real number $\lambda$, any
non-negative integer k, any smooth homogeneous function $\sigma$ of degree m, and any couple of real numbers $(p, q)$ with
$1\le p \le q\le\infty$, there hold
\begin{align*}
&&\mathrm{Supp} \,\hat{u}\subset\lambda \mathcal{B}\Rightarrow\sup_{|\alpha|=k}\|\partial^{\alpha}u\|_{L^q}\le C^{k+1}\lambda^{k+n(\frac1p-\frac1q)}\|u\|_{L^p},\\
&&\mathrm{Supp} \,\hat{u}\subset\lambda \mathcal{C}\Rightarrow C^{-k-1}\lambda^k\|u\|_{L^p}\le\sup_{|\alpha|=k}\|\partial^{\alpha}u\|_{L^p}
\le C^{k+1}\lambda^{k}\|u\|_{L^p},\\
&&\mathrm{Supp} \,\hat{u}\subset\lambda \mathcal{C}\Rightarrow \|\sigma(D)u\|_{L^q}\le C_{\sigma,m}\lambda^{m+n(\frac1p-\frac1q)}\|u\|_{L^p}.
\end{align*}
\end{lemma}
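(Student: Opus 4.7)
The plan is to reduce all three estimates to Young's convolution inequality via a single convolution representation. I would fix $\phi\in C_c^\infty(\R^n)$ equal to $1$ on the ball $\mathcal{B}$, and separately $\psi\in C_c^\infty(\R^n\setminus\{0\})$ equal to $1$ on the ring $\mathcal{C}$; set $h=\mathcal{F}^{-1}\phi\in\mathcal{S}(\R^n)$. Whenever $u$ satisfies $\mathrm{Supp}\,\hat u\subset\lambda\mathcal{B}$, the identity $\hat u(\xi)=\phi(\xi/\lambda)\hat u(\xi)$ rewrites in physical space as $u=h_\lambda *u$ with $h_\lambda(x)=\lambda^n h(\lambda x)$. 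The engine behind everything is the scaling identity $\|\partial^\alpha h_\lambda\|_{L^r}=\lambda^{|\alpha|+n(1-1/r)}\|\partial^\alpha h\|_{L^r}$, combined with the standard estimate $\|\partial^\alpha h\|_{L^r}\le C^{k+1}$ for $|\alpha|=k$, which follows via integration by parts on the inverse Fourier transform of $(i\xi)^\alpha\phi$.

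For the first bullet, differentiating the convolution yields $\partial^\alpha u=(\partial^\alpha h_\lambda)*u$, so Young's inequality with $1+1/q=1/r+1/p$ gives $\|\partial^\alpha u\|_{L^q}\le C^{k+1}\lambda^{k+n(1/p-1/q)}\|u\|_{L^p}$; taking the supremum over $|\alpha|=k$ concludes the argument. The upper bound in the second bullet is identical, after replacing $\mathcal{B}$ by any ball containing $\mathcal{C}$ and specialising to $p=q$ so that $r=1$.

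The lower bound in the second bullet is the only delicate point. Since $\mathcal{C}$ stays away from the origin, I would construct, for each multi-index $\alpha$ with $|\alpha|=k$, a function $\rho_\alpha\in C_c^\infty(\R^n\setminus\{0\})$ such that $\sum_{|\alpha|=k}(i\xi)^\alpha\rho_\alpha(\xi)\equiv 1$ on a neighborhood of $\mathcal{C}$. This can be achieved by multiplying $\psi$ by suitable smooth functions homogeneous of degree $-k$, for instance by expanding the identity $1=|\xi|^{-2k}\bigl(\sum_j\xi_j^2\bigr)^k$ on the support of $\psi$ and redistributing the resulting terms. Rescaling then gives $u=\lambda^{-k}\sum_{|\alpha|=k}\tilde\rho_{\alpha,\lambda}*(\partial^\alpha u)$ with $\tilde\rho_\alpha=\mathcal{F}^{-1}\rho_\alpha\in\mathcal{S}$ and $\tilde\rho_{\alpha,\lambda}(x)=\lambda^n\tilde\rho_\alpha(\lambda x)$, and Young's inequality with $r=1$ produces the required lower bound with constant $C^{k+1}$.

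For the third bullet, the homogeneity of $\sigma$ of degree $m$ gives $\sigma(\xi)\psi(\xi/\lambda)=\lambda^m(\sigma\psi)(\xi/\lambda)$, so that $\sigma(D)u=\lambda^m\kappa_\lambda *u$ with $\kappa=\mathcal{F}^{-1}(\sigma\psi)$. Since $\sigma\psi$ is smooth and compactly supported away from the origin, $\kappa\in\mathcal{S}$, and Young's inequality with $1+1/q=1/r+1/p$ finishes the job. The principal obstacle in the entire argument is the algebraic inversion step in the second bullet's lower bound, which depends crucially on $\mathcal{C}$ being bounded away from zero so that the polynomial symbol $\xi^\alpha$ can be smoothly inverted by a Schwartz multiplier; once this is in place, the rest of the lemma is a matter of rescaling and Young's inequality.
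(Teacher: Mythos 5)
The paper does not supply its own proof of this lemma; it is stated as the classical Bernstein inequality from Littlewood--Paley theory and cited implicitly to the textbook of Bahouri, Chemin and Danchin \cite{bcd}. Your argument is precisely the standard textbook proof: represent $u$ as a convolution $u=h_\lambda*u$ (respectively with a ring cutoff), differentiate or multiply by $\sigma$ under the transform, rescale, and invoke Young's inequality; the lower bound is the only nontrivial step and your partition-of-unity trick $\sum_{|\alpha|=k}(i\xi)^\alpha\rho_\alpha(\xi)=1$ built from $|\xi|^{-2k}(\sum_j\xi_j^2)^k$ on a neighborhood of $\mathcal C$ is exactly the usual device. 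All three items are correctly handled; the one point you pass over silently is the claim that the resulting constant can be taken of the geometric form $C^{k+1}$ uniformly in $k$, which requires tracking how $\|\partial^\alpha h\|_{L^1}$ and $\|\tilde\rho_\alpha\|_{L^1}$ grow with $k$ (they do grow, but only at a rate absorbed into $C^{k+1}$ because the frequency support lies in a fixed compact set away from the origin). This is a bookkeeping refinement of the argument you sketched rather than a gap in the method.
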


Next we  recall a few nonlinear estimates in Besov spaces which may be
obtained by means of paradifferential calculus.
Here, we recall the decomposition in the homogeneous context:
\begin{align}\label{bony}
uv=\dot{T}_uv+\dot{T}_vu+\dot{R}(u,v),
\end{align}
where
$$\dot{T}_uv\stackrel{\mathrm{def}}{=}\sum_{j\in Z}\dot{S}_{j-1}u\dot{\Delta}_jv, \hspace{0.5cm}\dot{R}(u,v)\stackrel{\mathrm{def}}{=}\sum_{j\in Z}
\dot{\Delta}_ju\widetilde{\dot{\Delta}}_jv,\quad\hbox{
and} \quad \widetilde{\dot{\Delta}}_jv\stackrel{\mathrm{def}}{=}\sum_{|j-j'|\le1}\dot{\Delta}_{j'}v.$$

The paraproduct $\dot{T}$ and the remainder $\dot{R}$ operators satisfy the following
continuous properties.

\begin{lemma}[\cite{bcd}]\label{fangji}
For all $s\in\mathbb{R}$, $\sigma\ge0$, and $1\leq p, p_1, p_2\leq\infty,$ the
paraproduct $\dot T$ is a bilinear, continuous operator from $\dot{B}_{p_1,1}^{-\sigma}\times \dot{B}_{p_2,1}^s$ to
$\dot{B}_{p,1}^{s-\sigma}$ with $\frac{1}{p}=\frac{1}{p_1}+\frac{1}{p_2}$. The remainder $\dot R$ is bilinear continuous from
$\dot{B}_{p_1, 1}^{s_1}\times \dot{B}_{p_2,1}^{s_2}$ to $
\dot{B}_{p,1}^{s_1+s_2}$ with
$s_1+s_2>0$, and $\frac{1}{p}=\frac{1}{p_1}+\frac{1}{p_2}$.
\end{lemma}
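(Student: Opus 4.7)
The plan is the standard Bony-paraproduct computation: for each bilinear piece, pin down the spectral localization of the generic summand, apply H\"older plus Bernstein's inequality to bound one $L^p$-block, and then perform the dyadic sum. The argument splits naturally into the paraproduct estimate and the remainder estimate.

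For the paraproduct $\dot T_u v=\sum_{j'} \dot S_{j'-1}u\,\dot\Delta_{j'}v$, I would first observe that $\dot S_{j'-1}u$ has Fourier support in a ball of radius $\sim 2^{j'-1}$ while $\dot\Delta_{j'}v$ has Fourier support in an annulus of size $\sim 2^{j'}$; hence each summand is spectrally localized in an annulus of size $\sim 2^{j'}$. Consequently $\dot\Delta_j(\dot T_u v)=0$ unless $|j-j'|\le N_0$ for some fixed $N_0$, so
\[
\|\dot\Delta_j\dot T_u v\|_{L^p}\lesssim\sum_{|j-j'|\le N_0}\|\dot S_{j'-1}u\|_{L^{p_1}}\|\dot\Delta_{j'}v\|_{L^{p_2}}
\]
by H\"older (here $1/p=1/p_1+1/p_2$). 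Since $\sigma\ge0$, Bernstein's Lemma \ref{bernstein} gives $\|\dot S_{j'-1}u\|_{L^{p_1}}\lesssim\sum_{k\le j'-2}2^{k\sigma}\cdot 2^{-k\sigma}\|\dot\Delta_k u\|_{L^{p_1}}\lesssim 2^{\sigma j'}\|u\|_{\dot B^{-\sigma}_{p_1,1}}$. Multiplying by $2^{j(s-\sigma)}$, using $|j-j'|\le N_0$ to replace $2^{j(s-\sigma)}$ by $2^{j'(s-\sigma)}$ up to a constant, and summing in $j$ reduces matters (after a discrete convolution in $j'-j$) to $\sum_{j'}2^{j's}\|\dot\Delta_{j'}v\|_{L^{p_2}}\cdot\|u\|_{\dot B^{-\sigma}_{p_1,1}}$, which is exactly $\|u\|_{\dot B^{-\sigma}_{p_1,1}}\|v\|_{\dot B^{s}_{p_2,1}}$.

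For the remainder $\dot R(u,v)=\sum_{j'}\dot\Delta_{j'}u\,\widetilde{\dot\Delta}_{j'}v$, the summand $\dot\Delta_{j'}u\,\widetilde{\dot\Delta}_{j'}v$ has Fourier support in a \emph{ball} of radius $\sim 2^{j'}$ rather than an annulus. Thus $\dot\Delta_j\dot R(u,v)$ receives contributions from all $j'\ge j-N_1$ for some fixed $N_1$, and Bernstein (for $L^q\hookrightarrow L^p$ in the ball case) combined with H\"older yields
\[
\|\dot\Delta_j\dot R(u,v)\|_{L^p}\lesssim 2^{jn(1/p_1+1/p_2-1/p)}\sum_{j'\ge j-N_1}\|\dot\Delta_{j'}u\|_{L^{p_1}}\|\widetilde{\dot\Delta}_{j'}v\|_{L^{p_2}};
\]
with our assumption $1/p=1/p_1+1/p_2$ the prefactor disappears. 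Multiplying by $2^{j(s_1+s_2)}$ and factoring $2^{-(j'-j)(s_1+s_2)}\cdot 2^{j'(s_1+s_2)}$ inside the sum turns the double sum into a discrete convolution against $2^{-k(s_1+s_2)}\mathbf 1_{k\ge -N_1}$, which is summable precisely because $s_1+s_2>0$. Applying Young's inequality for series then gives $\|\dot R(u,v)\|_{\dot B^{s_1+s_2}_{p,1}}\lesssim\|u\|_{\dot B^{s_1}_{p_1,1}}\|v\|_{\dot B^{s_2}_{p_2,1}}$.

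The only subtle point, and the place where the hypotheses are actually used, is ensuring the geometric series in $j'-j$ converges: $\sigma\ge0$ is what lets the low-frequency cutoff $\dot S_{j'-1}u$ be controlled by $\|u\|_{\dot B^{-\sigma}_{p_1,1}}$ in the paraproduct step, and $s_1+s_2>0$ is what makes the tail sum $\sum_{k\ge -N_1}2^{-k(s_1+s_2)}$ finite in the remainder step. Bilinearity and continuity then follow immediately from these quantitative bounds, so no further argument is needed.
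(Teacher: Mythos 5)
Your argument is correct and is the standard Bony-type estimate: spectral localization of each summand (annulus for $\dot T_u v$, ball for $\dot R(u,v)$), H\"older plus Bernstein on each dyadic block, and then a geometric/convolution sum in $j'-j$, where $\sigma\ge 0$ controls the low-frequency cut-off in the paraproduct case and $s_1+s_2>0$ makes the remainder tail summable. The paper does not prove Lemma \ref{fangji} itself; it cites it directly from \cite{bcd}, and your proof is essentially the one given there, so there is no genuinely different route to compare.
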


\begin{lemma}\label{jiao2}
Let   $ n=2,3$ and
\begin{equation*}
2\leq p \leq \min(4,{2n}/({n-2}))\quad\hbox{and, additionally, }\  p\not=4\ \hbox{ if }\ n=2.
\end{equation*} 
For any $(u,v)\in\dot{B}_{p,1}^{\frac {n}{p}-1}\cap\dot{B}_{p,1}^{\frac {n}{p}}(\R^n), $  there holds
\begin{align}\label{pengyou11}
&\|uv\|_{\dot{B}_{2,1}^{\f n2-1}}\lesssim(\|u\|_{\dot{B}_{p,1}^{\frac {n}{p}-1}}\|v\|_{\dot{B}_{p,1}^{\frac {n}{p}}}+\|u\|_{\dot{B}_{p,1}^{\frac {n}{p}}}\|v\|_{\dot{B}_{p,1}^{\frac {n}{p}-1}}).
\end{align}
\end{lemma}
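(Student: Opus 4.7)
The natural strategy is Bony's decomposition $uv=\dot T_u v+\dot T_v u+\dot R(u,v)$, estimating the three pieces separately in $\dot B^{n/2-1}_{2,1}$ via Lemma~\ref{fangji} combined with the scale-invariant Besov embedding $\dot B^{s}_{p_1,1}\hookrightarrow \dot B^{s-n(1/p_1-1/p_2)}_{p_2,1}$ for $p_1\le p_2$. The extra regularity coming from $u,v\in \dot B^{n/p-1}_{p,1}\cap\dot B^{n/p}_{p,1}$ is exactly what allows the tuning of Lebesgue exponents in the range $2\le p<4$.

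For the remainder I would take $p_1=p_2=p$ and $s_1=n/p-1$, $s_2=n/p$ in Lemma~\ref{fangji}, so that the positivity requirement $s_1+s_2=2n/p-1>0$ holds thanks to $p<2n$, producing $\dot R(u,v)\in \dot B^{2n/p-1}_{p/2,1}$. Since $p<4$ we have $p/2\le 2$, and the scale-invariant embedding $\dot B^{2n/p-1}_{p/2,1}\hookrightarrow \dot B^{n/2-1}_{2,1}$ yields the desired bound $\|\dot R(u,v)\|_{\dot B^{n/2-1}_{2,1}}\lesssim \|u\|_{\dot B^{n/p-1}_{p,1}}\|v\|_{\dot B^{n/p}_{p,1}}$.

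For the paraproduct $\dot T_u v$, Lemma~\ref{fangji} demands the low-frequency factor to lie in a space of non-positive regularity, so I would first embed $u\in \dot B^{n/p-1}_{p,1}\hookrightarrow \dot B^{n/q-1}_{q,1}$ with $q=\max(p,n)$, making $-\sigma:=n/q-1\le 0$. Pairing this with the conjugate exponent $p_2=2q/(q-2)$ (so that $1/q+1/p_2=1/2$), Lemma~\ref{fangji} delivers $\dot T_u v\in \dot B^{n/2-1}_{2,1}$ provided $v\in \dot B^{n/p_2}_{p_2,1}$. This last space is reached from $\dot B^{n/p}_{p,1}$ by the scale-invariant embedding as long as $p_2\ge p$, i.e.\ $q\le 2p/(p-2)$, and the choice $q=\max(p,n)$ satisfies this constraint precisely because $2\le p<4$ and $n\in\{2,3\}$. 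The paraproduct $\dot T_v u$ is handled symmetrically by swapping the roles of $u$ and $v$, and the two bounds together reproduce the two terms on the right-hand side of \eqref{pengyou11}.

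The hard part is only bookkeeping rather than anything conceptual. What is worth emphasizing is that the assumption $p<4$ is sharp and enters simultaneously in three places: implicitly through the Hölder inequality $L^p\cdot L^p\hookrightarrow L^{p/2}$ underlying the remainder estimate, through the positivity condition $s_1+s_2>0$, and through the feasibility $q\le 2p/(p-2)$ for the paraproduct. All three collapse at the scaling-critical value $p=4$, which is why the lemma stops just short of this endpoint.
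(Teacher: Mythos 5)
Your proof is correct and follows the same overall strategy as the paper: Bony's decomposition $uv=\dot T_u v+\dot T_v u+\dot R(u,v)$ estimated term by term via the paraproduct/remainder continuity of Lemma~\ref{fangji} together with scale-invariant Besov embeddings. The only differences are in the choice of Lebesgue and regularity indices. The paper sets $1/p^*=1/2-1/p$, embeds $u\in\dot B^{n/p-1}_{p,1}\hookrightarrow\dot B^{n/p^*-1}_{p^*,1}$ (valid since $p\le p^*$ for $p<4$), and estimates $\dot T_u v$ and $\dot R(u,v)$ together on $\dot B^{n/p^*-1}_{p^*,1}\times\dot B^{n/p}_{p,1}$. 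Your handling of the remainder, keeping both factors at exponent $p$ so that $\dot R(u,v)\in\dot B^{2n/p-1}_{p/2,1}$ and then embedding to $\dot B^{n/2-1}_{2,1}$, is in fact the cleaner choice: the total regularity $2n/p-1$ stays strictly positive for all $2\le p<4$ and $n\in\{2,3\}$, whereas the paper's allocation gives total regularity $n/2-1$, which equals $0$ when $n=2$ and sits exactly on the endpoint excluded in the $\dot R$-continuity of Lemma~\ref{fangji} as stated (there one only reaches an $\ell^\infty$-summed target). Your paraproduct bound with $q=\max(p,n)$ and $p_2=2q/(q-2)$ is valid and, when $p\ge n$, is the same estimate as the paper's after relabelling which factor is embedded; the device $q=n$ only bites for $n=3$, $p<3$. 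One small caveat on the closing sharpness remark: the positivity condition $2n/p-1>0$ forces $p<4$ only when $n=2$ (for $n=3$ it merely needs $p<6$), so $p<4$ is not equally binding in all three places you list, though this has no bearing on the validity of the proof.
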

\begin{proof}
According to Bony's decomposition, we can write $$uv=\dot{T}_uv+\dot{T}_vu+\dot{R}(u,v).$$
By Lemma \ref{fangji}, let $\frac  1{p*}=\frac  12-\frac1p$, we have
$$\|\dot{T}_uv+\dot{R}(u,v)\|_{\dot{B}_{2,1}^{\f n2-1}}\lesssim \|u\|_{\dot{B}_{p*,1}^{\frac {n}{p*}-1}}\|v\|_{\dot{B}_{p,1}^{\frac {n}{p}}}\lesssim\|u\|_{\dot{B}_{p,1}^{\frac {n}{p}-1}}\|v\|_{\dot{B}_{p,1}^{\frac {n}{p}}}.
$$
Similarly, one can get
$$\quad \|\dot{T}_vu\|_{\dot{B}_{2,1}^{\f n2-1}}\lesssim \|v\|_{\dot{B}_{p,1}^{\frac {n}{p}-1}}\|u\|_{\dot{B}_{p,1}^{\frac {n}{p}}}.$$
Thus, we complete the proof of this lemma.
\end{proof}

We also need the  following omitted proofs
 product law and commutator's estimates in Besov spaces.
\begin{lemma}\label{daishu}
Let $1\leq p, q\leq \infty$, $s_1\leq \frac{n}{q}$, $s_2\leq n\min\{\frac1p,\frac1q\}$, and $s_1+s_2>n\max\{0,\frac1p +\frac1q -1\}$. For $\forall (u,v)\in\dot{B}_{q,1}^{s_1}({\mathbb R} ^n)\times\dot{B}_{p,1}^{s_2}({\mathbb R} ^n)$, we have
\begin{align*}
\|uv\|_{\dot{B}_{p,1}^{s_1+s_2 -\frac{n}{q}}}\lesssim \|u\|_{\dot{B}_{q,1}^{s_1}}\|v\|_{\dot{B}_{p,1}^{s_2}}.
\end{align*}
\end{lemma}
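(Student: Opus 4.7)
The statement is a standard product estimate in homogeneous Besov spaces, and the plan is to obtain it by combining Bony's decomposition \eqref{bony} with the paraproduct/remainder continuity of Lemma \ref{fangji}, interpolated through the Bernstein embeddings coming from Lemma \ref{bernstein}.

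First I would split $uv=\dot T_u v+\dot T_v u+\dot R(u,v)$ and treat the three summands separately. For $\dot T_u v$ I would embed $u\in\dot B^{s_1}_{q,1}\hookrightarrow \dot B^{s_1-n/q}_{\infty,1}$; the hypothesis $s_1\le n/q$ is exactly what makes this target regularity non-positive, so that Lemma \ref{fangji} applies with $-\sigma=s_1-n/q$, $p_1=\infty$, $p_2=p$, and yields $\|\dot T_u v\|_{\dot B^{s_1+s_2-n/q}_{p,1}}\lesssim \|u\|_{\dot B^{s_1}_{q,1}}\|v\|_{\dot B^{s_2}_{p,1}}$. For $\dot T_v u$ the target exponent is the subtle point, since the product must end up in an $L^p$-based space while the high-frequency factor $u$ starts in $L^q$. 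I would choose a pair $(p_1,p_2)$ with $1/p_1+1/p_2=1/p$, $p_2\ge q$, $p_1\ge p$, embed $v\in\dot B^{s_2}_{p,1}\hookrightarrow \dot B^{s_2-n/p+n/p_1}_{p_1,1}$ and $u\in\dot B^{s_1}_{q,1}\hookrightarrow \dot B^{s_1-n/q+n/p_2}_{p_2,1}$, and then apply the paraproduct continuity in those scales. The two side-conditions $s_2\le n/p$ and $s_2\le n/q$ together are precisely what guarantees the existence of such a pair for which the regularity $s_2-n/p+n/p_1$ of the first argument is non-positive, as required by Lemma \ref{fangji}.

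The remainder $\dot R(u,v)$ is the most delicate piece. The natural recipe is to use $\dot R:\dot B^{s_1'}_{p_1,1}\times \dot B^{s_2'}_{p_2,1}\to \dot B^{s_1'+s_2'}_{r,1}$ with $1/r=1/p_1+1/p_2$, take $p_1=q$, $p_2=p$ so that $1/r=1/q+1/p$, and then embed $\dot B^{s_1+s_2}_{r,1}\hookrightarrow \dot B^{s_1+s_2-n/q}_{p,1}$ by Bernstein. This is admissible only when $r\ge1$, i.e.\ $1/p+1/q\le1$, in which case Lemma \ref{fangji} merely demands $s_1+s_2>0$. When $1/p+1/q>1$, that choice violates $r\ge1$, and I would instead fix $1/p_1+1/p_2=1$ (the maximal admissible value), at the price of an extra Bernstein loss totalling $n(1/p+1/q-1)$ when re-embedding into $\dot B^{s_1+s_2-n/q}_{p,1}$; this is exactly what produces the sharper threshold $s_1+s_2>n(1/p+1/q-1)$, so that the two sub-cases merge into the stated hypothesis $s_1+s_2>n\max\{0,1/p+1/q-1\}$.

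The only genuine obstacle I anticipate is the bookkeeping in the remainder step, namely choosing the auxiliary pair $(p_1,p_2)$ so as to achieve the sharp threshold while keeping every Bernstein loss non-negative. Once those choices are fixed, the conclusion is recovered from the discrete Young-type convolution estimates on the sequences $2^{ks_1}\|\dot\Delta_k u\|_{L^q}$ and $2^{ks_2}\|\dot\Delta_k v\|_{L^p}$, with no further idea needed.
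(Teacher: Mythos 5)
The paper lists this lemma among the ``omitted proofs,'' so there is no in-paper argument to compare against; your derivation via Bony's decomposition, the paraproduct and remainder continuity of Lemma \ref{fangji}, and Bernstein embeddings is the standard proof and is correct. In particular, for $\dot T_v u$ the auxiliary pair $(p_1,p_2)$ you introduce exists precisely because $s_2\le n\min\{1/p,1/q\}$, and for the remainder your re-embedding through $\dot B^{s_1+s_2-n(1/p+1/q-1)}_{1,1}$ when $1/p+1/q>1$ is exactly what produces the stated threshold $s_1+s_2>n(1/p+1/q-1)$, with no change to the output regularity $s_1+s_2-n/q$; both match the hypotheses, so the proof closes as you describe.
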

\begin{lemma}(  \cite[Lemma 2.100]{bcd})\label{jiaohuanzi}
Let $1\leq p, q\leq \infty$, $-1-n\min\{\frac1p,1-\frac{1}{p}\}<s\leq 1+\frac np$.
For any $v\in \dot{B}_{p,1}^{s}(\R^n)$ and $\nabla u\in \dot{B}_{p,1}^{\frac{n}{p}}(\R^n)$ with $\div u=0$,
there holds
$$
\|[u\cdot \nabla ,\dot{\Delta}_j]v\|_{L^p}\lesssim d_j 2^{-js}\|\nabla u\|_{\dot{B}_{p,1}^{\frac{n}{p}}}\|v\|_{\dot{B}_{p,1}^{s}}.
$$
\end{lemma}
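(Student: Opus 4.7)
The plan is to decompose the commutator via Bony's formula \eqref{bony} applied to $u^k\partial_k v$, which splits $[u\cdot\nabla,\dot{\Delta}_j]v$ into three structurally different pieces:
\begin{align*}
I_j &\stackrel{\mathrm{def}}{=} \sum_{|j-j'|\le N_0}[\dot{\Delta}_j,\dot{S}_{j'-1}u^k]\partial_k\dot{\Delta}_{j'}v,\\
II_j &\stackrel{\mathrm{def}}{=} \dot{\Delta}_j\dot{T}_{\partial_k v}u^k-\dot{T}_{\partial_k\dot{\Delta}_j v}u^k,\\
III_j &\stackrel{\mathrm{def}}{=} \dot{\Delta}_j\dot{R}(u^k,\partial_k v)-\dot{R}(u^k,\partial_k\dot{\Delta}_j v),
\end{align*}
with an implicit summation on $k$. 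Each block should admit a bound of the form $d_j 2^{-js}\|\nabla u\|_{\dot B^{n/p}_{p,1}}\|v\|_{\dot B^s_{p,1}}$, where $\{d_j\}\in\ell^1(\mathbb{Z})$ is a sequence of unit norm.

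For the genuine paraproduct commutator $I_j$, I would use the convolution representation $\dot{\Delta}_j f(x)=2^{jn}\int h(2^j y)f(x-y)\,dy$ with $h=\mathcal{F}^{-1}\varphi$, and a first-order Taylor expansion of $\dot{S}_{j'-1}u^k$, so that each summand becomes
\[
2^{jn}\int h(2^jy)\bigl(\dot{S}_{j'-1}u^k(x-y)-\dot{S}_{j'-1}u^k(x)\bigr)\partial_k\dot{\Delta}_{j'}v(x-y)\,dy.
\]
Taking $L^p$ norms and using $\int|y|\,|h(2^jy)|\,2^{jn}\,dy\lesssim 2^{-j}$ extracts the crucial factor $2^{-j}$, leaving $\|\nabla\dot{S}_{j'-1}u\|_{L^\infty}\|\partial_k\dot{\Delta}_{j'}v\|_{L^p}$. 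Bernstein's Lemma \ref{bernstein} together with the embedding $\dot B^{n/p}_{p,1}\hookrightarrow L^\infty$ convert this into the asserted bound, provided $s\le 1+\frac{n}{p}$ so that the resulting series in $j'$ is absolutely convergent.

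The block $II_j$ is of a similar flavour and is treated by the symmetric argument combined with the fact that $\dot{\Delta}_j\circ\dot{S}_{j'-1}$ is a convolution against a uniformly $L^1$ kernel for $|j-j'|\le N_0$; once again the upper threshold $s\le 1+\frac{n}{p}$ is what allows the paraproduct $\dot{T}_{\partial_k v}$ to act continuously, as in Lemma \ref{fangji}. The main obstacle will be the remainder block $III_j$ near the lower endpoint $s>-1-n\min\{1/p,1-1/p\}$, where a direct appeal to Lemma \ref{fangji} fails because $\partial_k v$ lies only in $\dot B^{s-1}_{p,1}$ and the regularity sum $(\frac{n}{p}+1)+(s-1)=s+\frac{n}{p}$ can become non-positive. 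Here I would exploit the divergence-free hypothesis to rewrite
\[
\dot{R}(u^k,\partial_k v)=\partial_k\dot{R}(u^k,v)-\dot{R}(\partial_k u^k,v)=\partial_k\dot{R}(u^k,v),
\]
pushing the derivative outside the remainder. Lemma \ref{fangji} then controls $\dot{R}(u^k,v)$ by $\|u\|_{\dot B^{n/p+1}_{p,1}}\|v\|_{\dot B^s_{p,1}}$, up to the Besov embedding $\dot B^{\sigma}_{p/2,1}\hookrightarrow\dot B^{\sigma-n/p}_{p,1}$ needed to return to index $p$ when $p\ge 2$; the factor $2^j$ produced by the outside $\partial_k$ and $\dot{\Delta}_j$ is absorbed by the $2^{-js}$ normalization, and tracking the admissible range of $s$ in this computation reproduces exactly $s>-1-n\min\{1/p,1-1/p\}$. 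Summing the three estimates over the narrow band of $j'$ that contribute delivers the sequence $(d_j)$ and completes the proof.
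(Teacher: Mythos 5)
The paper does not prove this lemma; it is quoted verbatim from Bahouri--Chemin--Danchin (Lemma~2.100), so there is no in-paper proof to compare against. Your Bony-decomposition scheme is the standard one for this estimate, and it also mirrors the argument the paper \emph{does} spell out for the companion Lemma~\ref{xin}: a convolution/Taylor bound for the near-diagonal paraproduct commutator, a direct paraproduct estimate for the $II_j$ block, and the divergence-free trick for the remainder. Two bookkeeping points are worth flagging. First, the threshold $s\le 1+\frac{n}{p}$ is \emph{not} needed for $I_j$: the sum over $j'$ there is finite since $|j-j'|\le N_0$, so there is no series whose convergence could fail; the threshold enters only through $II_j$, where it is exactly what makes $\bigl\|\dot S_{j'-1}\partial_k v\bigr\|_{L^\infty}\lesssim 2^{j'(1+\frac{n}{p}-s)}\|v\|_{\dot B^s_{p,1}}$ hold with a geometrically summable tail. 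Second, the chain $\dot R(u^k,v)\in\dot B^{s+1+n/p}_{p/2,1}\hookrightarrow\dot B^{s+1}_{p,1}$ uses $p\ge 2$; for $1\le p<2$ one must instead embed $u^k\in\dot B^{1+n/p}_{p,1}\hookrightarrow\dot B^{1+n-n/p}_{p',1}$ before invoking Lemma~\ref{fangji}, and it is precisely this dual route that produces the lower threshold $s>-1-n\bigl(1-\frac1p\bigr)$. You allude to the fact that the lower bound on $s$ ``should'' fall out of this computation, which is true, but since the statement claims all $1\le p\le\infty$ that branch needs to be carried out rather than left implicit. Apart from these two points the plan is sound and follows the intended route.
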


\begin{lemma}\label{xin}
Let   $ n=2,3$ and
\begin{equation*}
2\leq p \leq \min(4,{2n}/({n-2}))\quad\hbox{and, additionally, }\  p\not=4\ \hbox{ if }\ n=2.
\end{equation*}  For any   $v^\ell\in\dot{B}^{\frac{n}{2}-1}_{2,1}(\R^n)$, $v^h\in\dot{B}^{\f np-1}_{p,1}(\R^n),$ $\nabla u\in \dot{B}^{\f np}_{p,1}(\R^n)$ with $\div u = 0$,  there exists a constant $C$ such that
\begin{align}\label{xin2}
\sum_{j\le j_0}2^{(\frac{n}{2}-1)j}
\big\|[\ddj, u\cdot\nabla]v\big\|_{L^2}\le& C \big\|\nabla u\big\|_{\dot{B}^{\f np}_{p,1}}(\big\|v^\ell\big\|_{\dot{B}^{\f n2-1}_{2,1}}+\big\|v^h\big\|_{\dot{B}^{\f np-1}_{p,1}}).
\end{align}
\end{lemma}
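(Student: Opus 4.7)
The plan is to split $v=v^{\ell}+v^{h}$ according to the decomposition \eqref{eq:lhf0} and to treat the two resulting commutators separately, since they require rather different types of estimates: the low part $v^{\ell}$ lives in an $L^{2}$-based Besov space and can be handled by a classical Bony/Taylor argument, while the high part $v^{h}$ lives in an $L^{p}$-based space and, crucially, satisfies $\ddj v^{h}\equiv 0$ as soon as $j\le j_{0}-2$, so the ``commutator'' there essentially becomes a low-frequency product.

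For the $v^{\ell}$ contribution I would apply Bony's decomposition \eqref{bony} to $u\cdot\nabla v^{\ell}$ and split the commutator into three pieces: the genuine paraproduct commutator
$[\ddj,\dot T_{u^{i}}]\partial_{i}v^{\ell}=\sum_{|j'-j|\le 4}[\ddj,\dot S_{j'-1}u^{i}]\partial_{i}\dot\Delta_{j'}v^{\ell}$,
the paraproduct $\ddj(\dot T_{\partial_{i}v^{\ell}}u^{i})-\dot T_{\partial_{i}\ddj v^{\ell}}u^{i}$, and the remainder $\ddj\dot R(u^{i},\partial_{i}v^{\ell})-\dot R(u^{i},\partial_{i}\ddj v^{\ell})$. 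The first piece is bounded by a first-order Taylor expansion of the kernel of $\ddj$, yielding $\|[\ddj,\dot S_{j'-1}u^{i}]\partial_{i}\dot\Delta_{j'}v^{\ell}\|_{L^{2}}\lesssim 2^{-j}\|\nabla\dot S_{j'-1}u\|_{L^{\infty}}\|\nabla\dot\Delta_{j'}v^{\ell}\|_{L^{2}}$, and the embedding $\dot B^{n/p}_{p,1}\hookrightarrow L^{\infty}$ converts $\|\nabla u\|_{L^{\infty}}$ into $\|\nabla u\|_{\dot B^{n/p}_{p,1}}$. The other two pieces are not true commutators: I would estimate them directly using Lemma \ref{fangji} combined with Bernstein's Lemma \ref{bernstein} to exchange $L^{p}$ for $L^{2}$ at each dyadic block, and exploit $\div u=0$ to place the derivative favorably. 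Summing against the weight $2^{j(n/2-1)}$ for $j\le j_{0}$ then produces $\|\nabla u\|_{\dot B^{n/p}_{p,1}}\|v^{\ell}\|_{\dot B^{n/2-1}_{2,1}}$.

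For the $v^{h}$ contribution, the key observation is that for $j\le j_{0}-2$ one has $\ddj v^{h}=0$, so $[\ddj,u\cdot\nabla]v^{h}=\ddj(u\cdot\nabla v^{h})$; the finitely many indices $j\in\{j_{0}-1,j_{0}\}$ contribute only boundary terms that are treated separately. It then suffices to estimate $\|u\cdot\nabla v^{h}\|^{\ell}_{\dot B^{n/2-1}_{2,1}}$. Writing $u\cdot\nabla v^{h}=\dot T_{u}\nabla v^{h}+\dot T_{\nabla v^{h}}u+\dot R(u,\nabla v^{h})$ and using that $\nabla v^{h}$ is supported in frequencies $\gtrsim 2^{j_{0}}$, the paraproduct $\dot T_{u}\nabla v^{h}$ is supported in high frequencies and hence does not contribute to the low-frequency projection for $j\le j_{0}-c$. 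The remaining terms $\dot T_{\nabla v^{h}}u$ and $\dot R(u,\nabla v^{h})$ are estimated by the same Bony-based scheme as in Lemma \ref{jiao2}, with the Hölder exponent $1/p^{*}=1/2-1/p$ controlled by Bernstein; this yields exactly $\|\nabla u\|_{\dot B^{n/p}_{p,1}}\|v^{h}\|_{\dot B^{n/p-1}_{p,1}}$.

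The main obstacle will be the $v^{h}$ piece: a direct product estimate gives $\|v^{h}\|_{\dot B^{n/p}_{p,1}}$, not the desired $\|v^{h}\|_{\dot B^{n/p-1}_{p,1}}$, so one must be careful to pair the derivative $\nabla$ with $u$ (either via the commutator cancellation or by integrating by parts using $\div u=0$) before invoking Lemma \ref{jiao2}. The assumption $2\le p<4$ guarantees that the conjugate exponent $p^{*}\in(4,\infty]$ lies in the admissible range for the Bernstein rescaling between $L^{p}$ and $L^{2}$, which is exactly the restriction appearing in the hypotheses.
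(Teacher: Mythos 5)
Your proposal is sound and it is organized genuinely differently from the paper's own proof, so let me compare them.

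The paper never splits $v$ at the outset. It applies Bony's decomposition once to the full commutator, writing
$[\ddj,u\cdot\nabla]v=I_j^1+I_j^2+I_j^3$
with $I_j^1=\sum_{|k-j|\le4}[\ddj,\dot S_{k-1}u\cdot\nabla]\dot\Delta_k v$, $I_j^2=\sum_{|k-j|\le4}[\ddj,\dot\Delta_k u\cdot\nabla]\dot S_{k-1}v$, and $I_j^3=\sum_{k\ge j-1}[\ddj,\dot\Delta_k u\cdot\nabla]\widetilde{\dot\Delta_k}v$. The terms $I_j^1,I_j^2$ and the near-diagonal part of $I_j^3$ are handled by the kernel Taylor expansion plus Bernstein, and only contribute $\|v^\ell\|_{\dot B^{n/2-1}_{2,1}}$ because the relevant $k$'s stay $\le j_0+O(1)$. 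The distinction between $v^\ell$ and $v^h$ appears only inside $I_j^3$, by splitting the $k$-sum at $k=j_0$; for $k\ge j_0$ the paper rewrites $\dot\Delta_ku\cdot\nabla\widetilde{\dot\Delta_k}v=\partial_m(\dot\Delta_ku_m\widetilde{\dot\Delta_k}v)$, applies Bernstein in the $L^p\to L^{2p/(p-2)}$ range (this is where $2\le p<4$ enters), and produces the $\|v^h\|_{\dot B^{n/p-1}_{p,1}}$ factor. Your plan splits $v=v^\ell+v^h$ first. The $v^\ell$ commutator then closes with $\|v^\ell\|_{\dot B^{n/2-1}_{2,1}}$, and your observation that $\ddj v^h=0$ for $j\le j_0-2$ (in fact $j\le j_0-1$) neatly collapses the $v^h$ commutator into the single product $\ddj(u\cdot\nabla v^h)$, isolating the $\|v^h\|_{\dot B^{n/p-1}_{p,1}}$ contribution without a case analysis inside the remainder. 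Both routes rely on the same three ingredients: Taylor expansion of the kernel, $\div u=0$ to move the derivative onto the product, and the $L^p\to L^{2p/(p-2)}$ Bernstein exchange which forces $p<4$.

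Two points of caution. First, for the boundary index (essentially only $j=j_0$, where $\ddj v^h\neq0$), the naive split $[\ddj,u\cdot\nabla]v^h=\ddj(u\cdot\nabla v^h)-u\cdot\nabla\ddj v^h$ cannot be estimated term by term: the hypotheses give only $\nabla u\in\dot B^{n/p}_{p,1}$, which controls neither $\|u\|_{L^q}$ for finite $q$ nor $\|u\|_{L^\infty}$, so $\|u\cdot\nabla\ddj v^h\|_{L^2}$ is simply not bounded by the allowed quantities; the low frequencies of $u$ are unconstrained. You must therefore keep the Bony-level commutator structure for $j$ near $j_0$, exactly as the paper does uniformly. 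Your phrase ``boundary terms that are treated separately'' should be replaced by this explicit fallback, otherwise there is a gap. Second, in the $v^h$ remainder term, omitting $\div u=0$ and leaving $\nabla$ on $v^h$ gives $\sum_{j\le j_0}2^{(n/2-1)j}\sum_{k\ge j_0-1}2^{(1-n/2)k}c_kd_k$, which has a logarithmic divergence at $n=2$ (the outer $j$-sum of $2^{0\cdot j}$ diverges). Using $\div u=0$ to write $\ddj(u\cdot\nabla v^h)=\ddj\partial_m(u_m v^h)$ gains the factor $2^j$, turning the outer sum into a convergent geometric one; you do mention this, but it is the load-bearing step for $n=2$ and deserves to be made explicit in the argument rather than listed as an ``obstacle.''
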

\begin{proof}
Using the notion of para-products, we can easily write
$$[\dot{\Delta}_j,u\cdot\nabla]v\stackrel{\mathrm{def}}{=}I_j^1+I_j^2+I_j^3,$$ with
\begin{align*}
&I_j^1\stackrel{\mathrm{def}}{=}\sum_{|k-j|\le 2}[\dot{\Delta}_j,\dot{S}_{k-1}u\cdot\nabla]\dot{\Delta}_kv,\quad I_j^2\stackrel{\mathrm{def}}{=}\sum_{|k-j|\le 2}[\dot{\Delta}_j,\dot{\Delta}_ku\cdot\nabla]\dot{S}_{k-1}v,\nonumber\\
&I_j^3\stackrel{\mathrm{def}}{=}\sum_{k\ge j-1}[\dot{\Delta}_j,\dot{\Delta}_ku\cdot\nabla]\widetilde{\dot{\Delta}_k} v,\quad\quad \ \widetilde{\dot{\Delta}_k}=\dot{\Delta}_{k-1}+\dot{\Delta}_k+\dot{\Delta}_{k+1}.
\end{align*}%
From the definition of Bony's decomposition, one can write $I_j^1$ into
\begin{align*}
I_j^1=&\sum_{|k-j|\le2}[\dot{\Delta}_j,\dot{S}_{k-1}{{u}_m} ]\partial_m\dot{\Delta}_{k}v\nonumber\\
=&2^{jn}\sum_{|k-j|\leq2}\int_{\R^n}h(2^jy)
(\dot{S}_{k-1}{u}_m(x-y)-\dot{S}_{k-1}{u}_m(x))\partial_m\dot{\Delta}_{k}v(x-y)dy
\nonumber\\=&-2^{jn}\sum_{|k-j|\leq2}\int_{\R^n}h(2^jy)
\Big(\int_0^1y\cdot\nabla \dot{S}_{k-1}{u}_m(x-\tau y)\,d\tau\Big) \partial_m\dot{\Delta}_{k}v(x-y)dy
\end{align*}
from which and the H\"older inequality, we have
\begin{align}\label{tianchang}
\|I_j^1\|_{L^2}\lesssim&\sum_{|k-j|\le 2}2^{-j}\|\nabla \dot{S}_{k-1}u\|_{L^\infty}\|\nabla\dot{\Delta}_kv\|_{L^2}\nonumber\\
\lesssim&\sum_{|k-j|\le 2}2^{k-j}\|\nabla \dot{S}_{k-1}u\|_{L^\infty}\|\dot{\Delta}_kv\|_{L^2}.
\end{align}
As there is a small
overlap between low and high frequency in the definition of \eqref{wqq3}, we can further deduce from \eqref{tianchang}
that
\begin{align}\label{yuyi1}
\sum_{j\le j_0}2^{(\frac{n}{2}-1)j}\|I_j^1\|_{L^2}
\le& C\big\|\nabla u\big\|_{L^\infty}\big\|v^\ell\big\|_{\dot{B}^{\frac{n}{2}-1}_{2,1}}.
\end{align}

Let us turn to the second term $I_j^2$.
Using the fact that the support of $\dot{\Delta}_j(\dot{\Delta}_ku\cdot\nabla\dot{S}_{k-1}v)$ is restricted in an
annulus, we can get similarly to $I_j^1 $ that
\begin{align*}
\|I_j^2\|_{L^2}\lesssim&\sum_{|k-j|\le 2}2^{-j}\|\nabla \dot{S}_{k-1}v\|_{L^{\frac{2p}{p-2}}}\|\nabla\dot{\Delta}_ku\|_{L^p}\nonumber\\
\lesssim&\sum_{|k-j|\le 2}2^{-j}(\sum_{k'\le k-2}2^{k'}2^{\frac{nk'}{p}}\| \dot{\Delta}_{k'}v\|_{L^2})\|\nabla\dot{\Delta}_ku\|_{L^p}\nonumber\\
\lesssim&\sum_{|k-j|\le 2}(2^{(1-\frac{n}{2})k}d_k\big\|v^\ell\big\|_{\dot{B}^{\f n2-1}_{2,1}}(2^{\frac{nk}{p}}\|\nabla\dot{\Delta}_ku\|_{L^p}),
\end{align*}
which give rise to
\begin{align}\label{yuyi11}
\sum_{j\le j_0}2^{(\frac{n}{2}-1)j}\|I_j^2\|_{L^2}
\le& C\big\|\nabla u\big\|_{\dot{B}^{\f np}_{p,1}}\big\|v^\ell\big\|_{\dot{B}^{\frac{n}{2}-1}_{2,1}}.
\end{align}

It is much more involved to handle the remainder term $I_j^3$. We split it into two terms:
high frequency and low frequency
\begin{align}\label{geuq}
I_j^3=&\sum_{k\ge j-1}[\dot{\Delta}_j,\dot{\Delta}_ku\cdot\nabla]\widetilde{\dot{\Delta}_k} v\nonumber\\
=&\sum_{ j-1\le k\le j}[\dot{\Delta}_j,\dot{\Delta}_ku\cdot\nabla]\widetilde{\dot{\Delta}_k} v+\sum_{ k> j}[\dot{\Delta}_j,\dot{\Delta}_ku\cdot\nabla]\widetilde{\dot{\Delta}_k} v.
\end{align}
Exact the same line as $I_j^1$, we can get
\begin{align}\label{yuyi12}
\sum_{j\le j_0}2^{(\frac{n}{2}-1)j}\|\sum_{ j-1\le k\le j}[\dot{\Delta}_j,\dot{\Delta}_ku\cdot\nabla]\widetilde{\dot{\Delta}_k} v\|_{L^2}
\le& C\big\|\nabla u\big\|_{L^\infty}\big\|v^\ell\big\|_{\dot{B}^{\frac{n}{2}-1}_{2,1}}.
\end{align}
Due to lack of  quasi-orthogonality, we divide the second term on the right hand side of \eqref{geuq} into two terms:
\begin{align*}
\sum_{k> j}[\dot{\Delta}_j,\dot{\Delta}_ku\cdot\nabla]\widetilde{\dot{\Delta}_k} v
=&\sum_{k> j}\dot{\Delta}_j(\dot{\Delta}_ku\cdot\nabla\widetilde{\dot{\Delta}_k} v)+\sum_{ j<k,|k-j|\le3}\dot{\Delta}_ku\cdot\nabla\dot{\Delta}_j\widetilde{\dot{\Delta}_k} v\\
\stackrel{\mathrm{def}}{=}&I_j^{3,1}+I_j^{3,2}.
\end{align*}

To bound  $I_j^{3,1}$, we need to further write
\begin{align}\label{yuyi13}
\|I_j^{3,1}\|_{L^2}=\sum_{{j}<{k}\le j_0}\|I_j^{3,1}\|_{L^2}+\sum_{{j}\le j_0< k}\|I_j^{3,1}\|_{L^2}.
\end{align}
Using the condition $\div u=0$ and the H$\mathrm{\ddot{o}}$lder inequality gives
\begin{align*}
\sum_{{j}<{k}\le j_0}\|I_j^{3,1}\|_{L^2}\lesssim&\sum_{{j}<{k}< j_0}\|\dot{\Delta}_j\partial_m(\dot{\Delta}_ku_m\cdot\widetilde{\dot{\Delta}_k} v)\|_{L^2}\nonumber\\
\lesssim&\sum_{{j}<{k}< j_0}2^j\|\dot{\Delta}_ku\cdot\widetilde{\dot{\Delta}_k} v\|_{L^2}\nonumber\\
\lesssim&\sum_{{j}<{k}< j_0}2^{j-k}\|\dot{\Delta}_k\nabla u\|_{L^\infty}\|\widetilde{\dot{\Delta}_k} v\|_{L^2}\nonumber\\
\lesssim&\|\nabla u\|_{L^\infty}\sum_{{j}<{k}< j_0}2^{j-k}\|\widetilde{\dot{\Delta}_k} v\|_{L^2},
\end{align*}
which implies
\begin{align}\label{yuyi14}
\sum_{{j}<{k}\le j_0}2^{(\frac{n}{2}-1)j}\|I_j^{3,1}\|_{L^2}
\le& C\big\|\nabla u\big\|_{L^\infty}\big\|v^\ell\big\|_{\dot{B}^{\frac{n}{2}-1}_{2,1}}.
\end{align}
Similarly, the second term in \eqref{yuyi13} can be estimated as follow:
\begin{align}\label{ref}
\sum_{{j}\le j_0< k}2^{(\frac{n}{2}-1)j}\|I_j^{3,1}\|_{L^2}\lesssim&\sum_{{j}\le j_0< k}2^{(\frac{n}{2}-1)j}\|\dot{\Delta}_j\partial_m(\dot{\Delta}_ku_m\cdot\widetilde{\dot{\Delta}_k} v)\|_{L^2}\nonumber\\
\lesssim&\sum_{{j}\le j_0< k}2^{(\frac{n}{2}-1)j}2^j\|\dot{\Delta}_ku\cdot\widetilde{\dot{\Delta}_k} v\|_{L^2}\nonumber\\
\lesssim&\sum_{{j}\le j_0< k}2^{(\frac{n}{2}-1)j}2^{j-k}\|\dot{\Delta}_k\nabla u\|_{L^{\frac{2p}{p-2}}}\|\widetilde{\dot{\Delta}_k} v\|_{L^p}\nonumber\\
\lesssim&\sum_{{j}\le j_0< k}2^{(\frac{n}{2}-1)j}2^{j-k}2^{(\frac{1}{p}-\frac{p-2}{2p})nk}\|\dot{\Delta}_k\nabla u\|_{L^p}\|\widetilde{\dot{\Delta}_k} v\|_{L^p}\nonumber\\
\lesssim&\|\nabla u\|_{\dot{B}^{\frac{n}{p}}_{p,1}}\|v^h\|_{\dot{B}^{\frac{n}{p}-1}_{p,1}}.
\end{align}

In virtue of the embedding relation $\dot{B}^{\frac{n}{p}}_{p,1}(\R^n)\hookrightarrow L^\infty(\R^n)$,
 we get from \eqref{yuyi14} and \eqref{ref} that
\begin{align}\label{yuyi15}
\sum_{{j}\le j_0}2^{(\frac{n}{2}-1)j}\|I_j^{3,1}\|_{L^2}
\le& C\|\nabla u\|_{\dot{B}^{\frac{n}{p}}_{p,1}}(\|v^h\|_{\dot{B}^{\frac{n}{p}-1}_{p,1}}+\big\|v^\ell\big\|_{\dot{B}^{\frac{n}{2}-1}_{2,1}}).
\end{align}
Thanks to  Lemma \ref{bernstein}, we have
\begin{align}\label{yuyi16}
\sum_{{j}\le j_0}2^{(\frac{n}{2}-1)j}\|I_j^{3,2}\|_{L^2}
\lesssim&\sum_{{j}\le j_0}2^{(\frac{n}{2}-1)j}\sum_{j<k,|k-j|\le3}\|\dot{\Delta}_ku\cdot\nabla\dot{\Delta}_j\widetilde{\dot{\Delta}_k} v\|_{L^2}\nonumber\\
\lesssim&\sum_{{j}\le j_0}2^{(\frac{n}{2}-1)j}\sum_{|k-j|\le3}2^{j-k}\|\nabla \dot{\Delta}_ku\|_{L^\infty}\|\dot{\Delta}_j\widetilde{\dot{\Delta}_k} v\|_{L^2}\nonumber\\
\lesssim&\big\|\nabla u\big\|_{L^\infty}\big\|v^\ell\big\|_{\dot{B}^{\frac{n}{2}-1}_{2,1}}.
\end{align}

Together with \eqref{yuyi12}, \eqref{yuyi15}, \eqref{yuyi16}, we get from \eqref{geuq} that
\begin{align}\label{yuyi18}
\sum_{{j}\le j_0}2^{(\frac{n}{2}-1)j}\|I_j^{3}\|_{L^2}
\le& C\|\nabla u\|_{\dot{B}^{\frac{n}{p}}_{p,1}}(\|v^h\|_{\dot{B}^{\frac{n}{p}-1}_{p,1}}+\big\|v^\ell\big\|_{\dot{B}^{\frac{n}{2}-1}_{2,1}}).
\end{align}
Thus, the estimate \eqref{xin2} can be obtained from the combinations of \eqref{yuyi1}, \eqref{yuyi11}, \eqref{yuyi18}.

Consequently, we complete the proof of the lemma.
\end{proof}

\begin{corollary}\label{xinqing}
Under the assumption of Lemma \ref{xin}, let $A(D)$ be a zero-order Fourier multiplier, by the same processes as the proof of Lemma \ref{xin}, we can get the following two estimates hold:
\begin{align*}
\sum_{{j}\le j_0}2^{(\frac{n}{2}-1)j}
\big\|[\ddj A(D), u\cdot\nabla]v)\big\|_{L^2}\le& C \big\|\nabla u\big\|_{\dot{B}^{\f np}_{p,1}}(\big\|v^\ell\big\|_{\dot{B}^{\f n2-1}_{2,1}}+\big\|v^h\big\|_{\dot{B}^{\f np-1}_{p,1}}),\\
\sum_{{j}\le j_0}2^{(\frac{n}{2}-1)j}
\big\|[\ddj, u\cdot\nabla]A(D)v)\big\|_{L^2}\le& C \big\|\nabla u\big\|_{\dot{B}^{\f np}_{p,1}}(\big\|v^\ell\big\|_{\dot{B}^{\f n2-1}_{2,1}}+\big\|v^h\big\|_{\dot{B}^{\f np-1}_{p,1}})
.
\end{align*}
\end{corollary}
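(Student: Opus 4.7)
The plan is to adapt the proof of Lemma \ref{xin} with the zero-order Fourier multiplier $A(D)$ inserted, using two structural facts: $A(D)$ commutes with every Littlewood--Paley projector $\ddj$ and every low-frequency cut-off $\dot S_k$, and for each $j\in\Z$ the symbol $\varphi(2^{-j}\cdot)A(\cdot)$ remains supported in the annulus $\{\tfrac34\cdot2^j\le|\xi|\le\tfrac83\cdot2^j\}$. Consequently there exists a Schwartz function $\widetilde h$ (independent of $j$) with $\ddj A(D) f = 2^{jn}(\widetilde h(2^j\cdot)\star f)$ and $\|\widetilde h\|_{L^1}$ uniformly bounded, so $\ddj A(D)$ satisfies the same kernel representation, Bernstein-type bounds and $L^p$ boundedness as $\ddj$ itself.

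For the first inequality, I would replay the three-piece splitting $[\ddj A(D), u\cdot\nabla]v=I_j^1+I_j^2+I_j^3$ exactly as in the proof of Lemma \ref{xin}, only replacing $\ddj$ by $\ddj A(D)$ throughout. The only spot where the precise form of the kernel is used is the first-order Taylor trick that produces the gain of $2^{-j}$, namely
\begin{align*}
[\ddj A(D),\dot S_{k-1}u_m]\partial_m\dot\Delta_k v(x)=-2^{jn}\!\!\int_{\R^n}\!\!\widetilde h(2^jy)\Big(\int_0^1 y\cdot\nabla\dot S_{k-1}u_m(x-\tau y)\,d\tau\Big)\partial_m\dot\Delta_k v(x-y)\,dy.
\end{align*}
Since $\widetilde h$ is Schwartz, the bound by $2^{-j}\|\nabla\dot S_{k-1}u\|_{L^\infty}\|\nabla\dot\Delta_k v\|_{L^2}$ is preserved. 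The remaining estimates for $I_j^2$, for the splitting of $I_j^3$ into $|k-j|\le 1$ and $k>j$ pieces, and the divergence-free trick $\ddj A(D)(\dot\Delta_k u_m\cdot\widetilde{\dot\Delta_k}v)=\ddj A(D)\partial_m(\dot\Delta_k u_m\cdot\widetilde{\dot\Delta_k}v)$ used on $I_j^{3,1}$, depend only on Bernstein's lemma, H\"older's inequality, and the $L^p$ boundedness just recorded; they therefore carry over without change.

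For the second inequality the most economical route is to apply Lemma \ref{xin} directly to $A(D)v$ in place of $v$. Because $A(D)$ commutes with $\ddj$ we have $(A(D)v)^\ell = A(D)(v^\ell)$ and $(A(D)v)^h = A(D)(v^h)$, and since a zero-order multiplier is bounded on each $\dot B^s_{p,1}$ with $1<p<\infty$ (this follows from the uniform $L^p$ boundedness of the kernels $2^{jn}\widetilde h(2^j\cdot)$), we get
\begin{align*}
\|(A(D)v)^\ell\|_{\dot B^{\frac n2-1}_{2,1}}+\|(A(D)v)^h\|_{\dot B^{\frac np-1}_{p,1}}\lesssim \|v^\ell\|_{\dot B^{\frac n2-1}_{2,1}}+\|v^h\|_{\dot B^{\frac np-1}_{p,1}},
\end{align*}
which combined with Lemma \ref{xin} applied to $A(D)v$ yields the desired bound.

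The only genuine point to verify is uniform-in-$j$ control of $\|\widetilde h\|_{L^1}$ for the modified kernel, which is standard for any zero-order Mikhlin-type multiplier; beyond this, no new analytic idea is required, which is why the corollary is stated as an immediate byproduct of the proof of Lemma \ref{xin}.
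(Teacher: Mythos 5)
Your proposal is correct, and for the first inequality it follows exactly the route the paper has in mind: since $A$ is smooth away from the origin and homogeneous of degree zero (as with $\Lambda^{-1}\p\div$), the symbol $\varphi(2^{-j}\cdot)A(\cdot)$ has inverse Fourier transform $2^{jn}\widetilde h(2^j\cdot)$ with $\widetilde h$ Schwartz, so $\ddj A(D)$ enjoys the same kernel representation, Bernstein-type bounds, and uniform $L^p\to L^p$ boundedness as $\ddj$. You correctly isolate the only spot (the first-order Taylor expansion in $I_j^1$, and the analogous step for the $j-1\le k\le j$ piece of $I_j^3$) where the explicit kernel is used; everywhere else only the annulus support, H\"older, Bernstein, and the divergence-free trick are needed, and these carry over verbatim. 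One small wording slip: the boundedness of $A(D)$ on $\dot B^s_{p,1}$ follows from the uniform $L^1$ bound on the kernels via Young's inequality, not from an ``$L^p$ boundedness of the kernels.''

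For the second inequality, your reduction is cleaner than re-running the argument: since $A(D)$ commutes with every $\ddj$ and with the cut-offs defining $(\cdot)^\ell,(\cdot)^h$, applying Lemma~\ref{xin} to $A(D)v$ and then invoking the Besov boundedness of $A(D)$ gives the result in two lines. The paper simply says ``by the same processes'' and presumably intends one to re-run the proof with the extra $A(D)$; your shortcut is a more economical route to the same bound and costs nothing in generality, since the hypotheses of Lemma~\ref{xin} are preserved under $A(D)$.
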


\begin{lemma}\label{dahai}  (  \cite[Lemma 6.1]{helingbing})
Let $A(D)$ be a zero-order Fourier multiplier.
Let $j_0\in\Z,$ $s<1,$ $\sigma\in\R$,  $1\leq p_1,\ p_2\leq\infty$ and  $\frac  1p=\frac  1{p_1}+\frac  1{p_2}.$
Then there exists a constant $C$ depending only on $j_0$ and
on the regularity parameters such that
$$
\big\|[\dot S_{j_0} A(D), T_u ]v\big\|_{\dot B^{\sigma+s}_{p,1}}
\le C\|\nabla u \|_{\dot B^{s-1}_{p_1,1} }\|v\|_{\dot B^\sigma_{p_2,1}}
$$
and, for  $s=1,$
$$
\big\|[\dot S_{j_0} A(D), T_u ]v\big\|_{\dot B^{\sigma+1}_{p,1}}
\leq C\|\nabla u \|_{L^{p_1}}\|v\|_{\dot B^\sigma_{p_2,1}}.
$$
\end{lemma}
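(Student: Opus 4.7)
The plan is to exploit the fact that $\dot S_{j_0}A(D)$ is a low-frequency cut-off of a zero-order symbol, so its convolution kernel is smooth at scale $2^{-j_0}$, which should give the standard one-derivative gain for a commutator against a multiplication operator. I would first expand the paraproduct, writing
\[
[\dot S_{j_0}A(D),T_u]v \;=\; \sum_{k\in\Z}[\dot S_{j_0}A(D),\dot S_{k-1}u]\,\dot\Delta_k v,
\]
and then observe that Fourier-support considerations cut the sum: since $\dot S_{k-1}u\cdot\dot\Delta_k v$ is essentially frequency-localized near $2^k$ and $\dot S_{j_0}A(D)$ kills frequencies above $2^{j_0}$, only terms with $k\le j_0+N_0$ (for some fixed $N_0$ depending on the support of $\chi$ and $\varphi$) survive. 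Moreover, the whole commutator is then supported in $\{|\xi|\lesssim 2^{j_0}\}$, so in the definition of $\|\cdot\|_{\dot B^{\sigma+s}_{p,1}}$ only finitely many dyadic blocks participate (with a multiplicative constant allowed to depend on $j_0$).

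Next I would prove the basic kernel estimate. Writing $K_{j_0}(x)=2^{j_0 n}K(2^{j_0}x)$ with $K=\mathcal F^{-1}(\chi(\cdot)\sigma(\cdot))\in L^1(\R^n;(1+|x|)\,dx)$, the commutator identity reads
\[
[\dot S_{j_0}A(D),f]g(x)=\int_{\R^n}K_{j_0}(y)\bigl(f(x-y)-f(x)\bigr)g(x-y)\,dy,
\]
and the first-order Taylor formula $f(x-y)-f(x)=-y\!\cdot\!\int_0^1\nabla f(x-\tau y)\,d\tau$ together with Young's inequality produces
\[
\bigl\|[\dot S_{j_0}A(D),f]g\bigr\|_{L^p}\;\lesssim\;2^{-j_0}\|\nabla f\|_{L^{p_1}}\|g\|_{L^{p_2}},
\]
a gain of one derivative at the expense of the scaling factor $2^{-j_0}$. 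Applying this with $f=\dot S_{k-1}u$, $g=\dot\Delta_k v$ gives the pointwise bound for each term in the sum.

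Finally I would assemble the estimates. For $s<1$, Bernstein's inequality followed by summing a convergent geometric series yields $\|\nabla\dot S_{k-1}u\|_{L^{p_1}}\lesssim 2^{k(1-s)}\|\nabla u\|_{\dot B^{s-1}_{p_1,1}}$. Inserting this and using the localization $k\le j_0+N_0$, the $\dot B^{\sigma+s}_{p,1}$-norm unravels as
\[
\|[\dot S_{j_0}A(D),T_u]v\|_{\dot B^{\sigma+s}_{p,1}}\lesssim 2^{-j_0}\sum_{k\le j_0+N_0}2^{k(\sigma+1)}\|\dot\Delta_k v\|_{L^{p_2}}\|\nabla u\|_{\dot B^{s-1}_{p_1,1}},
\]
and bounding $2^k\le 2^{j_0+N_0}$ then collapses the sum into $C(j_0)\|v\|_{\dot B^\sigma_{p_2,1}}\|\nabla u\|_{\dot B^{s-1}_{p_1,1}}$. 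For the endpoint $s=1$, the geometric series diverges, but one simply replaces the Bernstein step by the uniform bound $\|\nabla\dot S_{k-1}u\|_{L^{p_1}}\le C\|\nabla u\|_{L^{p_1}}$ and the rest goes through unchanged.

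The main obstacle, and the reason the hypothesis $s\le 1$ is sharp, is precisely the geometric-series step: one needs $1-s\ge 0$ so that $\sum_{k'\le k-2}2^{k'(1-s)}$ either converges (yielding a factor $2^{k(1-s)}$) or can be handled trivially by Bernstein at $s=1$. For $s>1$ the paraproduct analysis would lose control over the low-frequency contributions of $\nabla u$, and a different argument would be required. The remaining technical care is only in bookkeeping the Fourier-support thresholds so that the $j_0$-dependent constants are explicit; no deeper difficulty arises.
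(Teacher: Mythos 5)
The paper does not actually prove this lemma -- it is quoted from Lemma~6.1 of Danchin--He -- so your proposal must stand on its own, and there it contains a genuine gap in the kernel step. You assert that $K=\mathcal F^{-1}\bigl(\chi(\cdot)\sigma(\cdot)\bigr)$ belongs to $L^1\bigl(\R^n;(1+|x|)\,dx\bigr)$, and the one--derivative gain with the prefactor $2^{-j_0}$ rests entirely on that first-moment bound. For the zero-order multipliers actually relevant here (the Leray projector $\p$, $\Lambda^{-1}\p\div$, and so on) the symbol $\sigma$ is homogeneous of degree $0$ and therefore \emph{singular at the origin}. Then $\chi\sigma$ is bounded and compactly supported but not continuous at $\xi=0$, and its inverse Fourier transform decays only like $|x|^{-n}$ at infinity, so $\int_{\R^n}|x|\,|K(x)|\,dx=+\infty$. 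Thus the pointwise bound $\|[\dot S_{j_0}A(D),f]g\|_{L^p}\lesssim 2^{-j_0}\|\nabla f\|_{L^{p_1}}\|g\|_{L^{p_2}}$ is not available for arbitrary $f,g$. (Incidentally, the scaling identity $K_{j_0}(x)=2^{j_0n}K(2^{j_0}x)$ itself already uses exact $0$-homogeneity of $\sigma$, so if instead you had a smooth $S^0$ symbol the first-moment claim would be fine but that scaling identity would fail; in neither reading does the step go through as written.)

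The gap is repairable, and the rest of your architecture -- paraproduct expansion, Fourier-support truncation to $k\le j_0+N_0$, first-order Taylor for the commutator, the Bernstein/geometric-series step requiring $s<1$, and the substitution $\|\nabla\dot S_{k-1}u\|_{L^{p_1}}\le C\|\nabla u\|_{L^{p_1}}$ at $s=1$ -- is the right one. The missing move is to exploit the annular spectrum \emph{before} passing to a kernel: both $\dot S_{k-1}u\,\dot\Delta_k v$ and $\dot\Delta_k v$ have Fourier support in an annulus $\{2^{k-2}\lesssim|\xi|\lesssim 2^{k+2}\}$, so with $\tilde\varphi\in C^\infty_c(\R^n\setminus\{0\})$ equal to $1$ there one has
\[
[\dot S_{j_0}A(D),\dot S_{k-1}u]\dot\Delta_k v=[B_k(D),\dot S_{k-1}u]\dot\Delta_k v,
\qquad B_k(D):=\dot S_{j_0}A(D)\,\tilde\varphi(2^{-k}D).
\]
The symbol of $B_k(D)$ is $C^\infty_c$ and avoids the singularity of $\sigma$, its kernel is Schwartz with first moment $\lesssim 2^{-k}$ (uniformly over $k\le j_0+N_0$, with constants depending only on $j_0$ through the finitely many shapes $\chi(2^{k-j_0}\cdot)\sigma\tilde\varphi$). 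Replacing your $2^{-j_0}$ by $2^{-k}$ makes the powers of $2^{k}$ cancel exactly, since $(\sigma+s)-1+(1-s)=\sigma$ for $s<1$ and $(\sigma+1)-1=\sigma$ for $s=1$; the crutch $2^{k}\le 2^{j_0+N_0}$ is then unnecessary, and the rest of your summation closes as you describe.
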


\setcounter{section}{3}
\setcounter{equation}{0}
\section*{\Large\bf 3. The proof of the Theorem \ref{zhuyaodingli}}
According to the local well-posedness obtained by \cite{chemin}, \cite{chenqionglei},  we can deduce similarly that  there exists a positive time $T$ so that the system \eqref{m} has a uniqueness local solution $(u,\tau)$ on $[0,T^\ast)$ such that for  any $T<T^\ast$
\begin{align}\label{tian}
&u^\ell\in C_b([0,T );{\dot{B}}_{2,1}^{\frac {n}{2}-1}(\R^n))\cap L^{1}
([0,T];{\dot{B}}_{2,1}^{\frac n2+1}(\R^n)),\nonumber\\
&
\tau^\ell\in C_b([0,T );\dot{B}_{2,1}^{\frac n2-1}(\R^n)), \quad(\Lambda^{-1}\p\div\tau)^\ell\in L^{1}
([0,T];{\dot{B}}_{2,1}^{\frac n2+1}(\R^n)),\nonumber\\
&u^h\in C_b([0,T );{\dot{B}}_{p,1}^{\frac {n}{p}-1}(\R^n))\cap L^{1}
([0,T];{\dot{B}}_{p,1}^{\frac np+1}(\R^n)),\nonumber\\
&
\tau^h\in C_b([0,T );\dot{B}_{p,1}^{\frac np}(\R^n)), \quad (\Lambda^{-1}\p\div\tau)^h\in L^{1}
([0,T];{\dot{B}}_{p,1}^{\frac np}(\R^n)).
\end{align}
We denote $T^\ast$ to be the largest possible time such that there holds \eqref{tian}. Then, the proof of Theorem \ref{zhuyaodingli} is reduced to show that $T^\ast=\infty$ under the assumption of \eqref{smallness2}.
In order to do so, we need to make a priori estimates for the smooth solution of system \eqref{m}.

\bigskip
\subsection{The low frequency estimates of the solutions }

Applying   $\dot{\Delta}_j\p$ to the second equation in  \eqref{m} and using a standard commutator's process give
\begin{align}\label{caihong1}
\partial_t \dot{\Delta}_ju+ u\cdot\nabla \dot{\Delta}_ju-\Delta \dot{\Delta}_ju-\dot{\Delta}_j\p\div \tau=[u\cdot\nabla, \dot{\Delta}_j\p]u.
\end{align}
Similarly, from the first equation in  \eqref{m}, we have
\begin{align}\label{caihong2}
\partial_t \dot{\Delta}_j\tau+ u\cdot \nabla \dot{\Delta}_j\tau   +\dot{\Delta}_j F(\tau, \nabla u)  -  \dot{\Delta}_jD (u)=[u\cdot\nabla, \dot{\Delta}_j]\tau.
\end{align}
Taking $L^2$ inner product with $\dot{\Delta}_ju$  on both hand side of \eqref{caihong1} and using
the fact that $
\langle u\cdot\nabla \dot{\Delta}_ju,\dot{\Delta}_ju\rangle=0$ give
\begin{align}\label{caihong3}
\frac{1}{2}\frac{d}{dt}\|\dot{\Delta}_ju\|_{L^2}^2+c_1\|\nabla \dot{\Delta}_ju\|_{L^2}^2=\langle\dot{\Delta}_j\p\div \tau, \dot{\Delta}_ju\rangle+\langle[u\cdot\nabla, \dot{\Delta}_j\p] u,\dot{\Delta}_j u\rangle.
\end{align}
Similarly,
taking $L^2$ inner product with $\dot{\Delta}_j\tau$  on both hand side of \eqref{caihong2} and using
the fact that $
\langle u\cdot\nabla \dot{\Delta}_j\tau,\dot{\Delta}_j\tau\rangle=0$,
we can get
\begin{align}\label{caihong5}
\frac{1}{2}\frac{d}{dt}\|\dot{\Delta}_j\tau\|_{L^2}^2=\langle \dot{\Delta}_jD (u), \dot{\Delta}_j\tau\rangle+\langle[u\cdot\nabla, \dot{\Delta}_j] \tau,\dot{\Delta}_j \tau\rangle
-\langle\dot{\Delta}_j F(\tau, \nabla u),\dot{\Delta}_j\tau\rangle.
\end{align}
It's not difficult to check
$$\langle\dot{\Delta}_j\p\div \tau, \dot{\Delta}_ju\rangle+\langle \dot{\Delta}_jD (u), \dot{\Delta}_j\tau\rangle=0.$$

Thus, summing up \eqref{caihong3}, \eqref{caihong5} and using the above fact we have
\begin{align}\label{caihong6}
&\frac{1}{2}\frac{d}{dt}(\|\dot{\Delta}_ju\|_{L^2}^2+\|\dot{\Delta}_j\tau\|_{L^2}^2)+c_12^{2j}\| \dot{\Delta}_ju\|_{L^2}^2\nonumber\\
&\quad\lesssim \left|\langle[u\cdot\nabla, \dot{\Delta}_j\p] u,\dot{\Delta}_j u\rangle\right|+\left|\langle[u\cdot\nabla, \dot{\Delta}_j] \tau,\dot{\Delta}_j \tau\rangle\right|
+\left|\langle\dot{\Delta}_j F(\tau, \nabla u),\dot{\Delta}_j\tau\rangle\right|
\end{align}
in which we have used the following Bernstein's inequality:
there exists a positive constant $c_1$ so that
$$
-\int_{\R^n}\Delta\dot{\Delta}_ju\cdot\dot{\Delta}_ju\,dx\ge c_12^{2j}\|\dot{\Delta}_ju\|_{L^2}^2.
$$

Due to lack of full dissipation for stress tensor $\tau$ in \eqref{m}, thus, we have to give up the  dissipation for $u$ also at present.
In the following, we will  get back the  full dissipation of velocity
and the partial dissipation of stress tensor by introducing a new   quantity.

Employing the H${\mathrm{\ddot{o}}}$lder inequality to \eqref{caihong6},  integrating  the resultant inequality from $0$ to $t$,  and multiplying   by $2^{j(\frac{n}{2}-1)}$, we can get by summing up about $j\le j_0$ that
\begin{align}\label{caihong7}
\|(u^\ell,\tau^\ell)\|_{\widetilde{L}_t^{\infty}(\dot{B}_{2,1}^{\frac n2-1})}
\lesssim&\|(u_0^\ell,\tau_0^\ell)\|_{\dot{B}_{2,1}^{\frac {n}{2}-1}}+{\sum_{j\le j_0}2^{(\frac n2-1)j}\|[u\cdot\nabla, \dot{\Delta}_j\p] u\|_{L^1_t(L^2)}}\nonumber\\
&+
{\sum_{j\le j_0}2^{(\frac n2-1)j}\|[u\cdot\nabla, \dot{\Delta}_j] \tau\|_{L^1_t(L^2)}}+\int^t_0\|(F(\tau, \nabla u))^\ell\|_{\dot{B}_{2,1}^{\frac {n}{2}-1}}\,ds.
\end{align}
It follows from  Lemma \ref{xin} that
\begin{align}\label{danian6-1}
&\sum_{j\le j_0}2^{(\frac n2-1)j}\|[u\cdot\nabla, \dot{\Delta}_j\p] u\|_{L^1_t(L^2)}\nonumber\\
&\quad\lesssim\int^t_0\|\nabla u\|_{\dot{B}_{p,1}^{\frac {n}{p}}}\big(\|u^\ell\|_{\dot{B}_{2,1}^{\frac n2-1}}+\|u^h\|_{\dot{B}_{p,1}^{\frac np-1}})\,ds
\nonumber\\
&\quad\lesssim\int^t_0\big(\|u^\ell\|_{\dot{B}_{2,1}^{\frac n2-1}}+\|u^h\|_{\dot{B}_{p,1}^{\frac np-1}})(\|u^\ell\|_{\dot{B}_{2,1}^{\frac n2+1}}+\|u^h\|_{\dot{B}_{p,1}^{\frac np+1}})\,ds
\end{align}
and
\begin{align}\label{danian6}
&
\sum_{j\le j_0}2^{(\frac n2-1)j}\|[u\cdot\nabla, \dot{\Delta}_j]\tau\|_{L^1_t(L^2)}\nonumber\\
&\quad\lesssim\int^t_0\|\nabla u\|_{\dot{B}_{p,1}^{\frac {n}{p}}}\big(\|\tau^\ell\|_{\dot{B}_{2,1}^{\frac n2-1}}+\|\tau^h\|_{\dot{B}_{p,1}^{\frac np}})\,ds
\nonumber\\
&\quad\lesssim\int^t_0\big(\|\tau^\ell\|_{\dot{B}_{2,1}^{\frac n2-1}}+\|\tau^h\|_{\dot{B}_{p,1}^{\frac np}})(\|u^\ell\|_{\dot{B}_{2,1}^{\frac n2+1}}+\|u^h\|_{\dot{B}_{p,1}^{\frac np+1}})\,ds.
\end{align}
In order to estimate the last term in \eqref{caihong7}, we first use the Bony decomposition to write
\begin{align}\label{D9}
\dot S_{j_0+1} (\tau\nabla u)=\dot S_{j_0+1} \bigl(\dot{T}_{\tau} \nabla u+ \dot{R}(\tau,\nabla u)\bigr)+\dot{T}_{\nabla u}\dot S_{j_0+1}  \tau
+[\dot S_{j_0+1} ,T_{\nabla u}]\tau.
\end{align}
By virtue of Lemma \ref{fangji}, we obtain
\begin{align}\label{D11-1}
\|\dot{T}_{\nabla u}\dot S_{j_0+1}  \tau\|_{\dot{B}_{2,1}^{\frac{n}{2}-1}}\lesssim\|\nabla u\|_{L^\infty}\|\tau^\ell\|_{\dot{B}_{2,1}^{\frac{n}{2}-1}}
\lesssim&\|\tau^\ell\|_{\dot{B}_{2,1}^{\frac{n}{2}-1}}\|\nabla u\|_{\dot{B}_{p,1}^{\frac {n}{p}}},
\end{align}
and for $\frac  1{p*}=\frac  12-\frac1p$
\begin{align}
\|\dot S_{j_0+1} \bigl(T_{\tau} \nabla u+ \dot{R}(\tau,\nabla u)\bigr)\|_{\dot{B}_{2,1}^{\frac{n}{2}-1}}\lesssim\|\tau\|_{\dot{B}_{p^*,1}^{-1+\frac {n}{p^*}}}\|\nabla u\|_{\dot{B}_{p,1}^{\frac {n}{p}}}
\lesssim&\|\tau\|_{\dot{B}_{p,1}^{-1+\frac {n}{p}}}\|\nabla u\|_{\dot{B}_{p,1}^{\frac {n}{p}}}.
\end{align}
By Lemma \ref{dahai},   we have
\begin{align}\label{D11}
\|[\dot S_{j_0+1} ,\dot{T}_{\nabla u}]\tau\|_{\dot{B}_{2,1}^{\frac{n}{2}-1}}\lesssim\|\nabla^2 u\|_{\dot{B}_{p^*,1}^{\frac {n}{p^*}-1}}\|\tau\|_{\dot{B}_{p,1}^{\frac {n}{p}-1}}
\lesssim&\|\tau\|_{\dot{B}_{p,1}^{\frac {n}{p}-1}}\|\nabla u\|_{\dot{B}_{p,1}^{\frac {n}{p}}}.
\end{align}

Combining with \eqref{D9}--\eqref{D11} implies
\begin{align}\label{danian7}
\int^t_0\|(F(\tau, \nabla u))^\ell\|_{\dot{B}_{2,1}^{\frac {n}{2}-1}}\,ds
\lesssim&\int^t_0\|\nabla u\|_{\dot{B}_{p,1}^{\frac {n}{p}}}(\|\tau^\ell\|_{\dot{B}_{2,1}^{\frac n2-1}}+\|\tau^h\|_{\dot{B}_{p,1}^{\frac np}})\,ds\nonumber\\
\lesssim&\int^t_0
(\|\tau^\ell\|_{\dot{B}_{2,1}^{\frac n2-1}}+\|\tau^h\|_{\dot{B}_{p,1}^{\frac np}})(\|u^\ell\|_{\dot{B}_{2,1}^{\frac n2+1}}+\|u^h\|_{\dot{B}_{p,1}^{\frac np+1}})\,ds.
\end{align}

Taking estimates \eqref{danian6} and \eqref{danian7} into \eqref{caihong7} gives
\begin{align}\label{nian8}
\|(u^\ell,\tau^\ell)\|&_{\widetilde{L}_t^{\infty}(\dot{B}_{2,1}^{\frac n2-1})}
\lesssim\|(u_0^\ell,\tau_0^\ell)\|_{\dot{B}_{2,1}^{\frac {n}{2}-1}}\nonumber\\
&+\int^t_0\big(\|(u^\ell,\tau^\ell)\|_{\dot{B}_{2,1}^{\frac n2-1}}+\|u^h\|_{\dot{B}_{p,1}^{\frac np-1}}+\|\tau^h\|_{\dot{B}_{p,1}^{\frac np}})(\|u^\ell\|_{\dot{B}_{2,1}^{\frac n2+1}}+\|u^h\|_{\dot{B}_{p,1}^{\frac np+1}})\,ds.
\end{align}

In the above low frequency arguments,  we do not get any integration in time for $u, \tau.$ Next,
we  shall use the special structure of
\eqref{m} to obtain the smoothing effect of $u$ and partial smoothing effect of $\tau$.

Applying project operator $\p$ on both hand side of the first two equation in \eqref{m} gives
\begin{eqnarray}\label{G1}
\left\{\begin{aligned}
&\partial_t  u +\p (u\cdot \nabla u)-\Delta  u-\p \div \tau=0,\\
&\partial_t \p \div \tau+\p \div(u\cdot \nabla \tau) -\Delta u+\p \div(F(\tau, \nabla u))=0.
\end{aligned}\right.
\end{eqnarray}
 Define $$\aa=\Lambda^{-1}\p\div\tau\quad\hbox{ and }\quad w=\Lambda \aa-u,$$ we can get by a simple computation from \eqref{G1} that
\begin{eqnarray}\label{G5}
\left\{\begin{aligned}
&\partial_t \aa+ u\cdot \nabla\aa+\Lambda u=f ,\\
&\partial_t  u + u\cdot \nabla u-\Delta  u-\Lambda \aa=g,\\
&\partial_t w+ u\cdot \nabla w+\La\aa=G,
\end{aligned}\right.
\end{eqnarray}
in which
\begin{align*}
&f\stackrel{\mathrm{def}}{=}- [\Lambda^{-1}\p\div,u\cdot \nabla ]\tau-\Lambda^{-1}\p \div(F(\tau, \nabla u)),\\
&g\stackrel{\mathrm{def}}{=}-[\p ,u\cdot \nabla] u,\quad\quad
G\stackrel{\mathrm{def}}{=}-[\La, u\cdot \nabla]\aa+\La f-g.
\end{align*}

As discussed in the first section, we will set our energy estimates  about \eqref{G5} in low frequency and high frequency respectively.
Applying $\ddj $ to the first equation in \eqref{G5} gives
\begin{align}\label{G6}
\partial_t \ddj \aa+u\cdot \nabla\ddj\aa+\ddj\La u=-[\ddj,u\cdot \nabla]\aa+\ddj f.
\end{align}
Taking $L^2$ inner product of $\ddj \aa$ with   \eqref{G6} and using integrating by parts, we obtain
\begin{align}\label{G7}
&\f12\f{d}{dt}\|{\ddj \aa}\|_{L^2}^2+\int_{\R^n}{\ddj \La\aa} \cdot\ddj u \,dx=-\int_{\R^n} [\ddj,u\cdot \nabla]\aa\cdot{\ddj \aa}\,dx+\int_{\R^n} \ddj f \cdot{\ddj \aa}\,dx.
\end{align}
Similarly, we have
\begin{align}
\f12\f{d}{dt}\|{\ddj u}\|_{L^2}^2+   2^{2j} \|{\ddj u}\|_{L^2}^2-&\int_{\R^n} \ddj \La \aa\cdot \ddj u\,dx\nonumber\\
&=-\int_{\R^n} [\ddj,u\cdot \nabla]u\cdot{\ddj u}\,dx+\int_{\R^n} \ddj g \cdot{\ddj u}\,dx,\label{G8}\\
\f12\f{d}{dt}\|{\ddj w}\|_{L^2}^2+\|{\ddj \La \aa}\|_{L^2}^2-&\int_{\R^n}\ddj \La \aa\cdot\ddj u \,dx\nonumber\\
&=-\int_{\R^n} [\ddj,u\cdot \nabla]w\cdot{\ddj w}\,dx+\int_{\R^n} \ddj G \cdot{\ddj w}\,dx,\label{G9}
\end{align}
in which we have used the following fact:
\begin{align*}
\int_{\R^n} \ddj \La \aa\cdot \ddj w\,dx=&\int_{\R^n}\ddj \La \aa\cdot(\ddj\Lambda \aa-\ddj u) \,dx\\
=&\|{\ddj \La \aa}\|_{L^2}^2-\int_{\R^n}\ddj \La \aa\cdot\ddj u \,dx.
\end{align*}

Let $0<\eta<1$  be a small constant which will be determined later on. Summing up \eqref{G7}--\eqref{G9} and using the H\"older inequality and Berntein's lemma, we have
\begin{align}\label{G12}
&\f12\f{d}{dt}(\|{\ddj \aa}\|_{L^2}^2+(1-\eta)\|{\ddj u}\|_{L^2}^2+\eta\|{\ddj w}\|_{L^2}^2)
+ (1-\eta)2^{2j} \|{\ddj u}\|_{L^2}^2+\eta2^{2j}\|{\ddj \aa}\|_{L^2}^2
\nonumber\\
&\quad\lesssim\|{\ddj \aa}\|_{L^2}(\|[\ddj,u\cdot \nabla]\aa\|_{L^2}+\|{\ddj f}\|_{L^2})+
\|{\ddj u}\|_{L^2}(\|[\ddj,u\cdot \nabla]u\|_{L^2}+\|{\ddj g}\|_{L^2})\nonumber\\
&\quad\quad+\|{\ddj w}\|_{L^2}(\|[\ddj,u\cdot \nabla]w\|_{L^2}+\|{\ddj G}\|_{L^2}).
\end{align}
For any $j\le j_0$, we can find an $\eta>0$ small enough such that
\begin{align}\label{G13}
\|{\ddj \aa}\|_{L^2}^2+(1-\eta)\|{\ddj u}\|_{L^2}^2+\eta\|{\ddj w}\|_{L^2}^2
\ge \frac{1}{C}(\|{\ddj \aa}\|_{L^2}^2+\|{\ddj u}\|_{L^2}^2).
\end{align}
From \eqref{G12}, one can deduce that
\begin{align}\label{G14}
&\f12\f{d}{dt}(\|{\ddj \aa}\|_{L^2}^2+\|{\ddj u}\|_{L^2}^2)
+ 2^{2j}( \|{\ddj  \aa}\|_{L^2}^2+\|{\ddj u}\|_{L^2}^2)
\nonumber\\
&\quad\lesssim\|{\ddj \aa}\|_{L^2}(\|[\ddj,u\cdot \nabla]\aa\|_{L^2}+\|{\ddj f}\|_{L^2})+
\|{\ddj u}\|_{L^2}(\|[\ddj,u\cdot \nabla]u\|_{L^2}+\|{\ddj g}\|_{L^2})\nonumber\\
&\quad\quad+\|{\ddj w}\|_{L^2}(\|[\ddj,u\cdot \nabla]w\|_{L^2}+\|{\ddj G}\|_{L^2}).
\end{align}

By the definition of the Besov space, we can further get
\begin{align}\label{G1125}
&\|(u^\ell,\aa^\ell)\|_{\widetilde{L}_t^{\infty}(\dot{B}_{2,1}^{\frac n2-1})}
+\int^t_0\| (u^\ell,\aa^\ell)\|_{\dot{B}_{2,1}^{\frac n2+1}}\,ds\nonumber\\
&\quad\lesssim\|(u_0^\ell,\aa_0^\ell)\|_{\dot{B}_{2,1}^{\frac {n}{2}-1}}
+
\int^t_0\|(f,G,g)^\ell\|_{\dot{B}_{2,1}^{\frac n2-1}}\,ds\nonumber\\
&\quad\quad+\int^t_0\sum_{j\le j_0}2^{(\frac n2-1)j}(\|[\ddj,u\cdot \nabla]\aa\|_{L^2}+\|[\ddj,u\cdot \nabla]u\|_{L^2}+\|[\ddj,u\cdot \nabla]\La\aa\|_{L^2})\,ds\nonumber\\
&\quad\lesssim\|(u_0^\ell,\aa_0^\ell)\|_{\dot{B}_{2,1}^{\frac {n}{2}-1}}
+
\int^t_0(\|f^\ell\|_{\dot{B}_{2,1}^{\frac n2-1}}+\|g^\ell\|_{\dot{B}_{2,1}^{\frac n2-1}}+\|([\La, u\cdot \nabla]\aa)^\ell\|_{\dot{B}_{2,1}^{\frac n2-1}})\,ds\nonumber\\
&\quad\quad+\int^t_0\sum_{j\le j_0}2^{(\frac n2-1)j}(\|[\ddj,u\cdot \nabla]\aa\|_{L^2}+\|[\ddj,u\cdot \nabla]u\|_{L^2}+\|[\ddj,u\cdot \nabla]\La\aa\|_{L^2})\,ds.
\end{align}

Next, we give the estimates to the terms in the righthand side of the above inequality.

\noindent A simple computation implies
\begin{align}\label{dfe}
\ddj([\Lambda^{-1}\p\div,u\cdot \nabla ]\tau)
=&\ddj(\Lambda^{-1}\p\div(u\cdot \nabla \tau)-\ddj(u\cdot \nabla \Lambda^{-1}\p\div\tau)\nonumber\\
=&[\ddj\Lambda^{-1}\p\div,u\cdot \nabla ]\tau-[\ddj,u\cdot \nabla]\Lambda^{-1}\p\div\tau.
\end{align}
As the Fourier multiplier $\Lambda^{-1} \p\div$ is of degree 0, thus, from \eqref{dfe} and Corollary \ref{xinqing}, we have
\begin{align}\label{G26+99}
&\| ([\Lambda^{-1}\p\div,u\cdot \nabla ]\tau)^\ell\|_{\dot{B}_{2,1}^{\frac n2-1}}\nonumber\\
&\quad\lesssim(\|\tau^\ell\|_{\dot{B}_{2,1}^{\frac n2-1}}+\|\tau^h\|_{\dot{B}_{p,1}^{\frac np-1}})\|\nabla u\|_{\dot{B}_{p,1}^{\frac np}}\nonumber\\
&\quad\lesssim(\|\tau^\ell\|_{\dot{B}_{2,1}^{\frac n2-1}}+\|\tau^h\|_{\dot{B}_{p,1}^{\frac np}})(\| u^\ell\|_{\dot B^{\frac  n2+1}_{2,1}}  +\|u^h\|_{\dot B^{\frac  np+1}_{p,1}}).
\end{align}
The term $\| (F(\tau, \nabla u))^\ell\|_{\dot{B}_{2,1}^{\frac n2-1}}$ can be dealt with the same method as \eqref{danian7}, as a result, we can get
\begin{align}\label{G26+9912}
 \|f^\ell\|_{\dot{B}_{2,1}^{\frac n2-1}}
\lesssim&(\|\tau^\ell\|_{\dot{B}_{2,1}^{\frac n2-1}}+\|\tau^h\|_{\dot{B}_{p,1}^{\frac np}})(\| u^\ell\|_{\dot B^{\frac  n2+1}_{2,1}}  +\|u^h\|_{\dot B^{\frac  np+1}_{p,1}}).
\end{align}

\noindent Thanks to Corollary \ref{xinqing}, we obtain
\begin{align}\label{G262324}
 \|g^\ell\|_{\dot{B}_{2,1}^{\frac n2-1}}\lesssim&\|([\p ,u\cdot \nabla] u)^\ell\|_{\dot{B}_{2,1}^{\frac n2-1}}
 \lesssim
 \| u\|_{\dot{B}_{p,1}^{\frac np-1}}\|u\|_{\dot{B}_{p,1}^{\frac np+1}}\nonumber\\
  \lesssim&(\|u^\ell\|_{\dot B^{-1+\frac  n2}_{2,1}}+\|u^h\|_{\dot B^{\frac  np-1}_{p,1}})(\| u^\ell\|_{\dot B^{\frac  n2+1}_{2,1}}  +\|u^h\|_{\dot B^{\frac  np+1}_{p,1}}).
 \end{align}
We get by a similar derivation of \eqref{D9}--\eqref{D11} that
\begin{align}\label{G27}
\|([\La, u\cdot \nabla]\aa)^\ell\|_{\dot{B}_{2,1}^{\frac n2-1}}
\lesssim&
\|(\La u\cdot \nabla\aa)^\ell\|_{\dot{B}_{2,1}^{\frac n2-1}}
\nonumber\\
\lesssim& (\| \aa^\ell\|_{\dot B^{\frac  n2-1}_{2,1}}  +\|\aa^h\|_{\dot B^{\frac  np}_{p,1}})(\|u^\ell\|_{\dot B^{\frac  n2+1}_{2,1}}+\|u^h\|_{\dot B^{\frac  np+1}_{p,1}}).
\end{align}
By using Lemma \ref{jiaohuanzi}, we can get
\begin{align}\label{G281}
&\int^t_0\sum_{j\le j_0}2^{(\frac n2-1)j}(\|[\ddj,u\cdot \nabla]\aa\|_{L^2}+\|[\ddj,u\cdot \nabla]u\|_{L^2}+\|[\ddj,u\cdot \nabla]\La\aa\|_{L^2})\,ds\nonumber\\
&\quad\lesssim\int^t_0(\| \aa^\ell\|_{\dot B^{\frac  n2-1}_{2,1}}  +\|\aa^h\|_{\dot B^{\frac  np}_{p,1}})(\|u^\ell\|_{\dot B^{\frac  n2+1}_{2,1}}+\|u^h\|_{\dot B^{\frac  np+1}_{p,1}})\,ds\nonumber\\
&\quad\quad+\int^t_0
(\|u^\ell\|_{\dot B^{-1+\frac  n2}_{2,1}}+\|u^h\|_{\dot B^{\frac  np-1}_{p,1}})(\| u^\ell\|_{\dot B^{\frac  n2+1}_{2,1}}  +\|u^h\|_{\dot B^{\frac  np+1}_{p,1}})\,ds.
\end{align}

Inserting \eqref{G26+9912}, \eqref{G262324}, \eqref{G27} and  \eqref{G281} into \eqref{G1125} gives
\begin{align}\label{G15678}
&\|(u^\ell,\aa^\ell)\|_{\widetilde{L}_t^{\infty}(\dot{B}_{2,1}^{\frac n2-1})}
+\int^t_0\| (u^\ell,\aa^\ell)\|_{\dot{B}_{2,1}^{\frac n2+1}}\,ds\nonumber\\
&\quad\lesssim\|(u_0^\ell,\aa_0^\ell)\|_{\dot{B}_{2,1}^{\frac {n}{2}-1}}
\nonumber\\
&\quad\quad+\int^t_0(\|(u^\ell,\tau^\ell)\|_{\dot B^{\frac  n2-1}_{2,1}}+\|u^h\|_{\dot B^{\frac  np-1}_{p,1}}+\|\tau^h\|_{\dot{B}_{p,1}^{\frac np}})(\| u^\ell\|_{\dot B^{\frac  n2+1}_{2,1}}  +\|u^h\|_{\dot B^{\frac  np+1}_{p,1}})\,ds.
\end{align}

Combining with \eqref{nian8} and \eqref{G15678}, we can get
\begin{align}\label{G1598}
&\|(u^\ell,\tau^\ell)\|_{\widetilde{L}_t^{\infty}(\dot{B}_{2,1}^{\frac n2-1})}+\|\tau^\ell\|_{\widetilde{L}_t^{\infty}(\dot{B}_{2,1}^{\frac n2})}+\int^t_0\| u^\ell\|_{\dot{B}_{2,1}^{\frac n2+1}}\,ds
+\int^t_0\| (\Lambda^{-1}\p\div\tau)^\ell\|_{\dot{B}_{2,1}^{\frac n2+1}}\,ds\nonumber\\
&\quad\lesssim\|(u_0^\ell,\tau_0^\ell)\|_{\dot{B}_{2,1}^{\frac {n}{2}-1}}
\nonumber\\
&\quad\quad+\int^t_0(\|(u^\ell,\tau^\ell)\|_{\dot B^{\frac  n2-1}_{2,1}}+\|u^h\|_{\dot B^{\frac  np-1}_{p,1}}+\|\tau^h\|_{\dot{B}_{p,1}^{\frac np}})(\| u^\ell\|_{\dot B^{\frac  n2+1}_{2,1}}  +\|u^h\|_{\dot B^{\frac  np+1}_{p,1}})\,ds.
\end{align}

\subsection{The high frequency estimates of the solutions }

In the following, we are concerned with the estimates for the high frequency part of the solution.
We shall find the damping effect of $\aa$ and smoothing effect of $u$ in the high frequency part.

Let $\ga=u-\La^{-1}\aa$, we can get by a simple computation from \eqref{G5} that
\begin{eqnarray}\label{asG5}
\left\{\begin{aligned}
&\partial_t \aa+ u\cdot \nabla\aa+\aa=f -\La\ga,\\
&\partial_t  \ga -\Delta  \ga=\ga+\La^{-1}\aa-\p (u\cdot \nabla u)-\Lambda^{-1}(u\cdot \nabla\aa)-\Lambda^{-1}f.
\end{aligned}\right.
\end{eqnarray}
Applying $\ddj $ to the first equation in \eqref{asG5} and taking $L^2$ inner product with $|\ddj\aa|^{p-2}\ddj\aa$, using integrating by part and  the H\"older inequality,  we thus get for all $t\geq0,$
\begin{align}\label{W1}
&\|\ddj \aa(t)\|_{L^p}+\int_0^t\|\ddj \aa\|_{L^p}\,ds\nonumber\\
&\quad\leq\|\ddj \aa_0\|_{L^p}
+\int_0^t(\|[\ddj,u\cdot\nabla]\aa\|_{L^p}+\|\ddj f\|_{L^p}+\|\ddj(\La\ga) \|_{L^p})\,ds
\end{align}
from which and  the definition of Besovs spaces that
\begin{align}\label{asA2}
\|\aa^h\|_{ \widetilde{L}_t^\infty(\dot B^{\frac  np}_{p,1})}+ \|\aa^h\|_{L^1_t(\dot B^{\frac  np}_{p,1})}
\lesssim& \|\aa_0^h\|_{\dot B^{\frac  np}_{p,1}}
+\sum_{j\ge j_0}2^{\frac {nj}{p}}\|[\ddj,u\cdot\nabla]\aa\|_{L^1_t(L^p)}\nonumber\\
&+ \|f^h\|_{L^1_t(\dot B^{\frac  np}_{p,1})}+ \|\ga^h\|_{L^1_t(\dot B^{\frac  np+1}_{p,1})}.
\end{align}
Similarly, we  get the high frequency of $\ga$ that
 \begin{align}\label{asaA2}
&\|\ga^h\|_{ \widetilde{L}_t^\infty(\dot B^{\frac  np-1}_{p,1})}+ \|\ga^h\|_{L^1_t(\dot B^{\frac  np+1}_{p,1})}\nonumber\\
&\quad\lesssim \|\ga_0^h\|_{\dot B^{\frac  np-1}_{p,1}}
+ \|\ga^h\|_{L^1_t(\dot B^{\frac  np-1}_{p,1})}+ \|(\La^{-1}\aa)^h\|_{L^1_t(\dot B^{\frac  np-1}_{p,1})}\nonumber\\
&\quad\quad+ \|(\p (u\cdot \nabla u))^h\|_{L^1_t(\dot B^{\frac  np-1}_{p,1})}+ \|(\Lambda^{-1}f)^h\|_{L^1_t(\dot B^{\frac  np-1}_{p,1})}+ \|(\Lambda^{-1}(u\cdot \nabla\aa))^h\|_{L^1_t(\dot B^{\frac  np-1}_{p,1})}\nonumber\\
&\quad\lesssim \|\ga_0^h\|_{\dot B^{\frac  np-1}_{p,1}}
+ 2^{-2j_0}\|\ga^h\|_{L^1_t(\dot B^{\frac  np+1}_{p,1})}+2^{-2j_0}\|\aa^h\|_{L^1_t(\dot B^{\frac  np}_{p,1})}\nonumber\\
&\quad\quad+ \|(\p (u\cdot \nabla u))^h\|_{L^1_t(\dot B^{\frac  np-1}_{p,1})}+ \|f^h\|_{L^1_t(\dot B^{\frac  np-1}_{p,1})}+ \|(\Lambda^{-1}(u\cdot \nabla\aa))^h\|_{L^1_t(\dot B^{\frac  np-1}_{p,1})}.
\end{align}

Combining with \eqref{asA2} and \eqref{asaA2}, one can deduce from $u=\ga+\La^{-1}\aa$ that
\begin{align}\label{laA2}
&\|u^h\|_{ \widetilde{L}_t^\infty(\dot B^{\frac  np-1}_{p,1})}+\|\aa^h\|_{ \widetilde{L}_t^\infty(\dot B^{\frac  np}_{p,1})}+ \|\aa^h\|_{L^1_t(\dot B^{\frac  np}_{p,1})}+ \|u^h\|_{L^1_t(\dot B^{\frac  np+1}_{p,1})}\nonumber\\
&\quad\lesssim \|u_0^h\|_{\dot B^{\frac  np-1}_{p,1}}+\|\aa_0^h\|_{\dot B^{\frac  np}_{p,1}}+ \|f^h\|_{L^1_t(\dot B^{\frac  np}_{p,1})}+\sum_{j\ge j_0}2^{\frac {nj}{p}}\|[\ddj,u\cdot\nabla]\aa\|_{L^1_t(L^p)}\nonumber\\
&\quad\quad+ \|(\p (u\cdot \nabla u))^h\|_{L^1_t(\dot B^{\frac  np-1}_{p,1})}+ \|(\Lambda^{-1}(u\cdot \nabla\aa))^h\|_{L^1_t(\dot B^{\frac  np-1}_{p,1})}.
\end{align}
With the aid of Lemmas  \ref{daishu} and \ref{jiaohuanzi}, we have
\begin{align}\label{la1}
\|f^h\|_{L^1_t(\dot B^{\frac  np}_{p,1})}
\lesssim&\int_0^t\|[\Lambda^{-1}\p\div,u\cdot \nabla ]\tau\|_{\dot B^{\frac  np}_{p,1}}+\|\Lambda^{-1}\p \div(F(\tau, \nabla u))\|_{\dot B^{\frac  np}_{p,1}} \,ds\nonumber\\
\lesssim&\int_0^t\|\tau\|_{\dot B^{\frac  np}_{p,1}}\|\nabla u\|_{\dot B^{\frac  np}_{p,1}}  \,ds\nonumber\\
\lesssim&\int_0^t(\|\tau^\ell\|_{\dot B^{-1+\frac  n2}_{2,1}}+\|\tau^h\|_{\dot B^{\frac  np}_{p,1}})(\| u^\ell\|_{\dot B^{\frac  n2+1}_{2,1}}  +\|u^h\|_{\dot B^{\frac  np+1}_{p,1}}) \,ds.
\end{align}
Similarly,
\begin{align}\label{la3-1}
\sum_{j\ge j_0}2^{\frac {nj}{p}}\|[\ddj,u\cdot\nabla]\aa\|_{L^1_t(L^p)}
\lesssim&\int_0^t\|\nabla u\|_{\dot B^{\frac  np}_{p,1}}\|\aa\|_{\dot B^{\frac  np}_{p,1}}  \,ds\nonumber\\
\lesssim&\int_0^t(\|\tau^\ell\|_{\dot B^{-1+\frac  n2}_{2,1}}+\|\tau^h\|_{\dot B^{\frac  np}_{p,1}})(\| u^\ell\|_{\dot B^{\frac  n2+1}_{2,1}}  +\|u^h\|_{\dot B^{\frac  np+1}_{p,1}}) \,ds,\nonumber\\
\|(\p (u\cdot \nabla u))^h\|_{L^1_t(\dot B^{\frac  np-1}_{p,1})}
\lesssim&\int_0^t\|u\|_{\dot B^{\frac  np-1}_{p,1}}\|\nabla u\|_{\dot B^{\frac  np}_{p,1}}  ds\nonumber\\
\lesssim&\int_0^t(\|u^\ell\|_{\dot B^{-1+\frac  n2}_{2,1}}+\|u^h\|_{\dot B^{\frac  np-1}_{p,1}})(\| u^\ell\|_{\dot B^{\frac  n2+1}_{2,1}}  +\|u^h\|_{\dot B^{\frac  np+1}_{p,1}}) \,ds.
\end{align}
By Lemma \ref{daishu}, one has
\begin{align}\label{la3}
\|(\Lambda^{-1}(u\cdot \nabla\aa))^h\|_{L^1_t(\dot B^{\frac  np-1}_{p,1})}
\lesssim&\int_0^t\|\Lambda^{-1} \div(u\aa)\|_{\dot B^{\frac  np}_{p,1}}\,ds
\lesssim\int_0^t\| u\|_{\dot B^{\frac  np}_{p,1}}\|\aa\|_{\dot B^{\frac  np}_{p,1}}\,ds\nonumber\\
\lesssim&\int_0^t(\| u\|_{\dot B^{\frac  np}_{p,1}}^2+\|\aa^\ell\|_{\dot B^{\frac  n2}_{2,1}}^2+\|\aa^h\|_{\dot B^{\frac  np}_{p,1}}^2)\,ds\nonumber\\
\lesssim&\int_0^t(\| u\|_{\dot B^{\frac  np-1}_{p,1}}\| u\|_{\dot B^{\frac  np+1}_{p,1}}+\|\aa^\ell\|_{\dot B^{\frac  n2-1}_{2,1}}\|\aa^\ell\|_{\dot B^{\frac  n2+1}_{2,1}}+\|\aa^h\|_{\dot B^{\frac  np}_{p,1}}^2)\,ds\nonumber\\
\lesssim&\int_0^t(\|u^\ell\|_{\dot B^{-1+\frac  n2}_{2,1}}+\|u^h\|_{\dot B^{\frac  np-1}_{p,1}})(\| u^\ell\|_{\dot B^{\frac  n2+1}_{2,1}}  +\|u^h\|_{\dot B^{\frac  np+1}_{p,1}}) \,ds\nonumber\\
&+\int_0^t(\|\tau^\ell\|_{\dot B^{-1+\frac  n2}_{2,1}}+\|\tau^h\|_{\dot B^{\frac  np}_{p,1}})(\| \aa^\ell\|_{\dot B^{\frac  n2+1}_{2,1}}  +\|\aa^h\|_{\dot B^{\frac  np}_{p,1}}) \,ds.
\end{align}

Plugging \eqref{la1}--\eqref{la3} into \eqref{laA2} implies
\begin{align}\label{maaA2}
&\|u^h\|_{ \widetilde{L}_t^\infty(\dot B^{\frac  np-1}_{p,1})}+\|\aa^h\|_{ \widetilde{L}_t^\infty(\dot B^{\frac  np}_{p,1})}+ \|\aa^h\|_{L^1_t(\dot B^{\frac  np}_{p,1})}+ \|u^h\|_{L^1_t(\dot B^{\frac  np+1}_{p,1})}\nonumber\\
&\quad\lesssim \|u_0^h\|_{\dot B^{\frac  np-1}_{p,1}}+\|\aa_0^h\|_{\dot B^{\frac  np}_{p,1}}\nonumber\\
&\quad\quad
+\int_0^t(\|\tau^\ell\|_{\dot B^{-1+\frac  n2}_{2,1}}+\|\tau^h\|_{\dot B^{\frac  np}_{p,1}})(\| \aa^\ell\|_{\dot B^{\frac  n2+1}_{2,1}}  +\|\aa^h\|_{\dot B^{\frac  np}_{p,1}}) \,ds\nonumber\\
&\quad\quad+\int_0^t(\|(u^\ell,\tau^\ell)\|_{\dot B^{\frac  n2-1}_{2,1}}+\|u^h\|_{\dot B^{\frac  np-1}_{p,1}}+\|\tau^h\|_{\dot{B}_{p,1}^{\frac np}})(\| u^\ell\|_{\dot B^{\frac  n2+1}_{2,1}}  +\|u^h\|_{\dot B^{\frac  np+1}_{p,1}})\,ds.
\end{align}
 From the first equation in \eqref{m}, we can get similarly to \eqref{asA2} that
\begin{align}\label{xihuan1}
\|\tau^h\|_{\widetilde{L}_t^{\infty}(\dot{B}_{p,1}^{\frac np})}
\lesssim&\|\tau_0^h\|_{\dot{B}_{p,1}^{\frac {n}{p}}}+\int^t_0\| u^h\|_{\dot{B}_{p,1}^{\frac np+1}}\,ds+
\int^t_0\|\nabla u\|_{\dot{B}_{p,1}^{\frac {n}{p}}}\|\tau\|_{\dot{B}_{p,1}^{\frac np}}\,ds.
\end{align}

Together with \eqref{maaA2} and \eqref{xihuan1}, one has
\begin{align}\label{xihuan2}
&\|u^h\|_{ \widetilde{L}_t^\infty(\dot B^{\frac  np-1}_{p,1})}+\|\tau^h\|_{ \widetilde{L}_t^\infty(\dot B^{\frac  np}_{p,1})}+ \|(\Lambda^{-1}\p\div\tau)^h\|_{L^1_t(\dot B^{\frac  np}_{p,1})}+ \|u^h\|_{L^1_t(\dot B^{\frac  np+1}_{p,1})}\nonumber\\
&\quad\lesssim \|u_0^h\|_{\dot B^{\frac  np-1}_{p,1}}+\|\tau_0^h\|_{\dot B^{\frac  np}_{p,1}}
\nonumber\\
&\quad\quad
+\int_0^t(\|\tau^\ell\|_{\dot B^{-1+\frac  n2}_{2,1}}+\|\tau^h\|_{\dot B^{\frac  np}_{p,1}})(\| \aa^\ell\|_{\dot B^{\frac  n2+1}_{2,1}}  +\|\aa^h\|_{\dot B^{\frac  np}_{p,1}}) \,ds\nonumber\\
&\quad\quad+\int_0^t(\|(u^\ell,\tau^\ell)\|_{\dot B^{\frac  n2-1}_{2,1}}+\|u^h\|_{\dot B^{\frac  np-1}_{p,1}}+\|\tau^h\|_{\dot{B}_{p,1}^{\frac np}})(\| u^\ell\|_{\dot B^{\frac  n2+1}_{2,1}}  +\|u^h\|_{\dot B^{\frac  np+1}_{p,1}})\,ds.
\end{align}

\subsection{ Complete the proof of our main Theorem \ref{zhuyaodingli}}
Now, we can complete the proof of our main Theorem \ref{zhuyaodingli} by the continuous arguments.
Denote
\begin{align*}
X(t)\stackrel{\mathrm{def}}{=}&\|(u^\ell,\tau^\ell)\|_{\widetilde{L}_t^{\infty}(\dot{B}_{2,1}^{\frac n2-1})}+\| u^\ell\|_{L^1_t(\dot{B}_{2,1}^{\frac n2+1})}
+\| (\Lambda^{-1}\p\div\tau)^\ell\|_{L^1_t(\dot{B}_{2,1}^{\frac n2+1})}\nonumber\\
&\quad+\|u^h\|_{ \widetilde{L}_t^\infty(\dot B^{\frac  np-1}_{p,1})}+\|\tau^h\|_{ \widetilde{L}_t^\infty(\dot B^{\frac  np}_{p,1})}+ \|(\Lambda^{-1}\p\div\tau)^h\|_{L^1_t(\dot B^{\frac  np}_{p,1})}+ \|u^h\|_{L^1_t(\dot B^{\frac  np+1}_{p,1})}.
\end{align*}

Combining with \eqref{G1598} and \eqref{xihuan2}, we can get
\begin{align}\label{G31}
&\|(u^\ell,\tau^\ell)\|_{\widetilde{L}_t^{\infty}(\dot{B}_{2,1}^{\frac n2-1})}+\| u^\ell\|_{L^1_t(\dot{B}_{2,1}^{\frac n2+1})}
+\| (\Lambda^{-1}\p\div\tau)^\ell\|_{L^1_t(\dot{B}_{2,1}^{\frac n2+1})}\nonumber\\
&\quad\quad+\|u^h\|_{ \widetilde{L}_t^\infty(\dot B^{\frac  np-1}_{p,1})}+\|\tau^h\|_{ \widetilde{L}_t^\infty(\dot B^{\frac  np}_{p,1})}+ \|(\Lambda^{-1}\p\div\tau)^h\|_{L^1_t(\dot B^{\frac  np}_{p,1})}+ \|u^h\|_{L^1_t(\dot B^{\frac  np+1}_{p,1})}\nonumber\\
&\quad\lesssim\|(u_0^\ell,\tau_0^\ell)\|_{\dot{B}_{2,1}^{\frac {n}{2}-1}}+ \|u_0^h\|_{\dot B^{\frac  np-1}_{p,1}}+\|\tau_0^h\|_{\dot B^{\frac  np}_{p,1}}
\nonumber\\
&\quad\quad
+\int_0^t(\|\tau^\ell\|_{\dot B^{-1+\frac  n2}_{2,1}}+\|\tau^h\|_{\dot B^{\frac  np}_{p,1}})(\| (\Lambda^{-1}\p\div\tau)^\ell\|_{\dot B^{\frac  n2+1}_{2,1}}  +\|(\Lambda^{-1}\p\div\tau)^h\|_{\dot B^{\frac  np}_{p,1}}) \,ds\nonumber\\
&\quad\quad+\int_0^t(\|(u^\ell,\tau^\ell)\|_{\dot B^{\frac  n2-1}_{2,1}}+\|u^h\|_{\dot B^{\frac  np-1}_{p,1}}+\|\tau^h\|_{\dot{B}_{p,1}^{\frac np}})(\| u^\ell\|_{\dot B^{\frac  n2+1}_{2,1}}  +\|u^h\|_{\dot B^{\frac  np+1}_{p,1}})\,ds.
\end{align}
 From \eqref{G31} and the Gronwall inequality, we have
\begin{align}\label{yi6}
X(t)\le Ce^{CX(t)}(\|(u_0^\ell,\tau_0^\ell)\|_{\dot{B}_{2,1}^{\frac {n}{2}-1}}+ \|u_0^h\|_{\dot B^{\frac  np-1}_{p,1}}+\|\tau_0^h\|_{\dot B^{\frac  np}_{p,1}}).
\end{align}
 Now let $\delta$ be a
 positive constant, which will
 be determined later on. For any $T^\flat\in[0,T^\ast),$ we define
 \begin{align*}
T^{\ast\ast}\stackrel{\mathrm{def}}{=}\sup\Big\{
t\in [0,T^\flat):
X(t)
\leq \delta\Big\}.
\end{align*}
From \eqref{yi6}, we have for any  $t\in[0,T^{\ast\ast})$ there holds
\begin{align}\label{yi69}
X(t)\le C_1e^{{C_1\delta}}(\|(u_0^\ell,\tau_0^\ell)\|_{\dot{B}_{2,1}^{\frac {n}{2}-1}}+ \|u_0^h\|_{\dot B^{\frac  np-1}_{p,1}}+\|\tau_0^h\|_{\dot B^{\frac  np}_{p,1}}).
\end{align}
Choosing $\delta<\frac{1}{4C_1}$ fixed and then letting $$\|(u_0^\ell,\tau_0^\ell)\|_{\dot{B}_{2,1}^{\frac {n}{2}-1}}+ \|u_0^h\|_{\dot B^{\frac  np-1}_{p,1}}+\|\tau_0^h\|_{\dot B^{\frac  np}_{p,1}}<\frac{1}{8C_1},$$ we can get from
\eqref{yi69} that
\begin{align*}
X(t)\le \frac{\delta}{2}, \quad \forall t\in [0,T^{\ast\ast}],
\end{align*}
 this contradicts with the definition of $T^{\ast\ast}$, thus we conclude that $T^{\ast\ast}=T^{\ast}$.
Consequently, we complete the proof of Theorem \ref{zhuyaodingli} by standard continuation argument. \hspace{2cm} $\square$

\bigskip
\noindent{\large\bf Conflict of interest}
This work does not have any conflicts of interest.

 \bigskip
\noindent {\large\bf Acknowledgement} {This work is partially
 supported by the NSFC (11601533).}

\newpage


\begin{thebibliography}{99}



\bibitem{bcd}
H.~Bahouri, J.Y. Chemin, R.~Danchin,
\newblock { {F}ourier {A}nalysis and {N}onlinear {P}artial {D}ifferential
  {E}quations},
\newblock Grundlehren Math. Wiss. , vol. {\textbf{343}}, Springer-Verlag,
  Berlin, Heidelberg, 2011.
\newblock $\,$

\bibitem{bird}
R.B. Bird, C.F. Curtiss, R.C. Armstrong, O. Hassager,
\newblock Dynamics of polymeric liquids,
\newblock {Fluid Mechanics, vol. 1,  2nd edn
Wiley, New York, 1987.}
\newblock $\,$


\bibitem{chemin}
J.Y. Chemin, N. Masmoudi,
\newblock About lifespan of regular solutions of equations related to viscoelastic fluids,
\newblock {\it SIAM J. Math. Anal.}, {\bf 33}, 84--112, 2001.
\newblock $\,$

\bibitem{chenqionglei}
Q. Chen, X. Hao,
\newblock Global well-posedness in the critical Besov spaces for the incompressible Oldroyd-B model without damping mechanism,
\newblock {\it J. Math. Fluid Mech.}, {\bf 21}, 42, 2019.
\newblock $\,$

\bibitem{miaochangxing} Q. Chen, C. Miao,  Z. Zhang, { Global well-posedness for compressible Navier-Stokes
equations with highly oscillating initial velocity,} {\it Comm. Pure Appl. Math.}, {\bf 63}, 1173--1224, 2010.

\bibitem{constanin}
P. Constantin, M. Kliegl,
\newblock Note on global regularity for two-dimensional Oldroyd-B fluids with diffusive stress,
\newblock {\it Arch. Ration. Mech. Anal.}, {\bf 206}, 725--740, 2012.
\newblock $\,$

\bibitem{danchin2000} R. Danchin,
{ Global existence  in  critical spaces for compressible Navier-Stokes equations},
\newblock {\it Invent. Math.,} {\bf 141}, 579--614,  2000.

\bibitem{helingbing}
R. Danchin, L. He, { The incompressible limit in $ L^p$ type critical spaces},
\newblock {\it
 Math. Ann., }{\bf 366}, 1365--1402,2016.

\bibitem{EL}
T. M. Elgindi,  J. Liu,
{Global well-posedness to the generalized Oldroyd type models in $\mathbb{R}^3$,}
\newblock {\it  J. Differential Equations}, {\bf 259},  1958--1966, 2015.
\newblock $\,$


\bibitem{ER}
T.M. Elgindi,  F. Rousset,
{Global regularity for some Oldroyd-B type models},
\newblock {\it  Comm. Pure Appl. Math.}, {\bf 68},  2005--2021, 2015.
\newblock $\,$


\bibitem{FZ}
D. Fang, R. Zi,
{Global solutions to the Oldroyd-B model with a class of large initial data},
\newblock {\it  SIAM J. Math. Anal.}, {\bf 48},  1054--1084, 2016.
\newblock $\,$


\bibitem{GS}
C. Guillop\'{e}, J.C. Saut,
{Existence results for the flow of viscoelastic fluids with a differential constitutive law},
\newblock {\it  Nonlinear Anal.}, {\bf 15},  849--869, 1990.
\newblock $\,$





\bibitem{GS2}
C. Guillop\'{e}, J.C. Saut,
{Global existence and one-dimensional nonlinear stability of shearing motions of viscoelastic fluids of Oldroyd type},
\newblock {\it  RAIRO Mod\'{e}l. Math. Anal. Num\'{e}r.}, {\bf 24},  369--401, 1990.
\newblock $\,$







\bibitem{LMZ}
Z. Lei, N. Masmoudi,  Y. Zhou,
{Remarks on the blowup criteria for Oldroyd models},
\newblock {\it  J. Differential Equations}, {\bf 248}, 328--341, 2010.
\newblock $\,$


\bibitem{LZ}
Z. Lei,  Y. Zhou,
{Global existence of classical solutions for the two-dimensional Oldroyd
model via the incompressible limit},
\newblock {\it  SIAM J. Math. Anal.}, {\bf 37}, 797--814, 2005.
\newblock $\,$



\bibitem{lin2012}
F. Lin,
\newblock  Some analytical issues for elastic complex fluids,
\newblock {\it Comm. Pure Appl. Math.}, {\bf 65}, 893--919, 2012.
\newblock $\,$



\bibitem{LM}
P.L. Lions,  N. Masmoudi,
{Global solutions for some Oldroyd models of non-Newtonian flows},
\newblock {\it  Chinese Ann. Math. Ser. B}, {\bf 21}, 131--146, 2000.
\newblock $\,$

\bibitem{Oldroyd}
J. Oldroyd,
{Non-Newtonian effects in steady motion of some idealized elastico-viscous
liquids},
\newblock {\it  Proc. Roy. Soc. Edinburgh Sect. A}, {\bf 245}, 278--297, 1958.
\newblock $\,$

\bibitem{zhuyi}
Y. Zhu,
\newblock Global small solutions of 3D incompressible Oldroyd-B model without damping mechanism,
\newblock {\it  J. Funct. Anal.}, {\bf 274}, 2039--2060, 2018.
\newblock $\,$


\bibitem{zuiruizhao}
R. Zi, D. Fang, T. Zhang,
\newblock Global solution to the incompressible
Oldroyd-B model in the critical $L^p$
framework: the case of the non-small
coupling parameter,
\newblock {\it  Arch. Rational Mech. Anal.}, {\bf 213},  651--687, 2014.
\newblock $\,$

\end{thebibliography}
\end{document}